\title{On a Conjecture for $\aleph_0$-bounded Groups} 
\author{Marion Scheepers}
\date{}
\newtheorem{theorem}{{\bf Theorem}}
\newtheorem{proposition}[theorem]{{\bf Proposition}}
\newtheorem{lemma}[theorem]{{\bf Lemma}}
\newtheorem{corollary}[theorem]{{\bf Corollary}}
\newtheorem{conjecture}{{\bf Conjecture}}
\newtheorem{problem}{{\bf Problem}}
\newtheorem{GSproblem}{{\bf Problem}}
\newcommand{\naturals}{{\mathbb N}}
\newcommand{\reals}{{\mathbb R}}
\newcommand{\sone}{{\sf S}_1}
\subjclass[2000]{Primary 03E05, Secondary 03E35, 03E55, 03E65, 22A99}
\keywords{Rothberger bounded, Borel Conjecture, Kurepa Hypothesis, Chang's Conjecture, $n$-huge cardinal} 
\begin{document}
\maketitle
\begin{abstract}
We show that it is consistent, relative to the consistency of a strongly inaccessible cardinal, that an instance of the generalized Borel Conjecture introduced in \cite{GS} holds while the classical Borel Conjecture fails.
 \end{abstract}


In \cite{Borel} E. Borel conjectured that when a subset $X$ of the additive group of real numbers, $(\reals,+)$ has the property that for each sequence $(I_n:n\in\naturals)$ of open neighborhoods of the identity element $0$ there is a sequence $(x_n:n\in\naturals)$ of real numbers such that $X\subseteq \bigcup_{n\in\naturals}x_n+I_n$, then $X$ is countable. This conjecture about the real line and its subsets has been considered from several points of view, leading to several characterizations of the subsets of the real line that satisfy Borel's hypotheses. In the early 20th century these sets were said to be sets with property $\textsf{C}$, but by the 1970's the terminology has changed to sets with \emph{strong measure zero}. Generalizing to mathematical structures beyond  the real line, and beyond the context of metrizable spaces, led to yet another change in terminology:
A subset $X$ of a topological group $(G,\odot)$ is said to be \emph{Rothberger bounded} if there is for each sequence $(I_n:n\in\naturals)$ of neighborhoods of the identity element $\textsf{id}_G$ of $G$, a sequence $(x_n:n\in\naturals)$ of elements of $G$ such that $X\subseteq \bigcup_{n\in\naturals} x_n\odot I_n$. 

Rothberger boundedness is an example of the following selection principle, $\textsf{S}_1(\mathcal{A},\mathcal{B})$, where $\mathcal{A}$ and $\mathcal{B}$ are families of sets:
\begin{quote}
For each sequence $(A_n:n\in\naturals)$ of elements of $\mathcal{A}$, there is a sequence $(b_n:n\in\naturals)$ such that: For each $n$, $b_n$ is an element of $A_n$, and the set $\{b_n:n\in\naturals\}$ is an element of $\mathcal{B}$.
\end{quote}
To see that Rothberger boundedness of a subset $X$ of a topological group $(G,\odot)$ is an instance of this selection principle, define the following two families $\mathcal{O}_{nbd}$ and $\mathcal{O}_X$: For 
an open neighborhood $N$ of the identity element $\textsf{id}_G$, let $x\odot N$ denote the set $\{x\odot a:\; a\in N\}$, and let $\mathcal{O}(N)$ denote the set $\{x\odot N:x\in G\}$, an open cover of $G$. Then $\mathcal{O}_{nbd}$ denotes the set $\{\mathcal{O}(U): U \mbox{ an open neighborhood of }\textsf{id}_G\}$ of all open covers of $G$ obtainable in this way. Second, for a subset $X$ of the group $G$, the symbol $\mathcal{O}_X$ denotes the collection whose elements are covers of $X$ by sets open in $G$. Then $\sone(\mathcal{O}_{nbd},\mathcal{O}_X)$ states that $X$ is a Rothberger bounded subset of the group $(G,\odot)$.

Thus, Borel's Conjecture is the statement that for the group of real numbers with the operation of addition, this selection principle holds for a subset $X$ if, and only if, $X$ is countable. Studies of Borel's Conjecture has led to the following reformulation:

\begin{theorem}\label{BCgroups}
Borel's Conjecture is equivalent to the statement that for each second countable $\textsf{T}_0$ topological group, each Rothberger bounded subset is countable.
\end{theorem}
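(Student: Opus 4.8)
The plan is to prove the two implications of the equivalence separately, with the bulk of the effort going into one of them. The direction from the group statement to Borel's Conjecture is essentially a specialization: the additive group $(\reals,+)$ is a second countable $\textsf{T}_0$ topological group, and for this group the family $\onbd$ of covers is generated by the $\varepsilon$-neighborhoods of $0$, so a subset $X$ is Rothberger bounded exactly when it satisfies the covering hypothesis in Borel's original formulation, i.e. when it has strong measure zero. Thus, if every Rothberger bounded subset of every second countable $\textsf{T}_0$ group is countable, then in particular every strong measure zero subset of $\reals$ is countable, which is Borel's Conjecture. I would dispose of this implication in a sentence.

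For the converse I assume Borel's Conjecture, fix a second countable $\textsf{T}_0$ topological group $(G,\odot)$ and a Rothberger bounded subset $X \subseteq G$, and aim to show $X$ is countable. The first step is to put a metric on $G$: a $\textsf{T}_0$ topological group is Hausdorff, and by the Birkhoff--Kakutani theorem a first countable Hausdorff group carries a compatible \emph{left-invariant} metric $d$; second countability then makes $(G,d)$ separable. Left-invariance is exactly what is needed to identify the two notions of smallness, since the left translate $x\odot B_d(\textsf{id}_G,\varepsilon)$ of the $\varepsilon$-ball about the identity is the ball $B_d(x,\varepsilon)$, and these balls form a neighborhood base at $\textsf{id}_G$. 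Feeding balls into the Rothberger bounded condition and, conversely, shrinking arbitrary neighborhoods to balls, I would verify that $X$ is Rothberger bounded if and only if $X$ has strong measure zero with respect to $d$. The problem is thereby reduced to a purely metric statement: Borel's Conjecture implies that every separable metric space of strong measure zero is countable.

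To route this back to the real line I would use the standard fact that strong measure zero is inherited by uniformly continuous images --- against the modulus of continuity one converts a cover of the domain by balls of suitable radii into a cover of the image by balls of the prescribed radii. Applying this to the Fr\'echet--Kuratowski embedding $x\mapsto (d(x,d_k))_{k\in\naturals}$ of the separable space $(X,d)$ (after truncating $d$ to $\min(d,1)$) into the Hilbert cube $\hilbertcube$, a map that is injective and, being coordinatewise $1$-Lipschitz, uniformly continuous for the product metric, turns an uncountable Rothberger bounded $X$ into an uncountable strong measure zero subset of $\hilbertcube$. Everything then comes down to transferring Borel's Conjecture from $\reals$ to $\hilbertcube$.

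That transfer is where I expect the genuine difficulty to lie. Projecting to the coordinates is not enough: each projection is uniformly continuous, so its image is a strong measure zero, hence countable, subset of $[0,1]$, placing an uncountable strong measure zero $Y\subseteq\hilbertcube$ inside a product $\prod_n C_n$ of countable sets --- but such a product can be an uncountable compact set like $\{0,1\}^{\naturals}$, which fails to have strong measure zero, so this containment alone does not bound $|Y|$. One cannot circumvent $\hilbertcube$ by mapping $X$ directly and injectively into $\reals$, either, because a continuous injection of a compact square into the line would be an embedding, which is impossible; the passage through the Hilbert cube is therefore essential. The main obstacle is thus to exploit the \emph{full} strength of strong measure zero in the product, and I would attack it through the known equivalence of Borel's Conjecture over $\reals$ with Borel's Conjecture over the Cantor space $2^{\naturals}$, together with a uniformly continuous coding of the relevant product of countable sets into $2^{\naturals}$, at which point Borel's Conjecture finishes the argument.
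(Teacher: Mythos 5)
Your first two steps are sound and match the intended route: the specialization to $(\reals,+)$ for the easy direction, and the Birkhoff--Kakutani left-invariant metrization identifying Rothberger boundedness with strong measure zero, which reduces everything to the statement that Borel's Conjecture implies every strong measure zero separable metric space is countable. The gap is in your final transfer step, and it is fatal as stated. There is in general \emph{no} uniformly continuous injection of the relevant sets into $2^{\naturals}$: a uniformly continuous map defined on $Y\subseteq\hilbertcube$ extends continuously to the closure (completion) of $Y$, and an uncountable strong measure zero set $Y$ may perfectly well be dense in $\hilbertcube$ (likewise, the product $\prod_n C_n$ is dense when each $C_n$ is dense in $\lbrack 0,1\rbrack$). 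The extension is then a continuous map from the connected space $\hilbertcube$ into the totally disconnected space $2^{\naturals}$, hence constant, so the original map is constant and cannot be injective on any set with more than one point. So the ``coding into Cantor space'' that your argument hinges on does not exist, and no choice of totally disconnected target can repair it.

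The correct finish --- and the one consistent with the paper's own toolkit --- is Carlson's theorem, which appears in the paper as Lemma \ref{carlson}: every separable metric space of cardinality less than $2^{\aleph_0}$ admits a one-to-one Lipschitz map into $\reals$. (The cardinality restriction is exactly what circumvents your own correct observation that $\hilbertcube$ admits no continuous injection into the line, and mapping into the connected target $\reals$ avoids the obstruction above.) Argue as follows: Borel's Conjecture implies $\neg\textsf{CH}$ (Sierpi\'nski), so $\aleph_1<2^{\aleph_0}$; if a strong measure zero separable metric space $X$ were uncountable, pass to a subspace $Y\subseteq X$ with $\vert Y\vert=\aleph_1$, which is still strong measure zero and of cardinality less than $2^{\aleph_0}$; Carlson's lemma gives a one-to-one Lipschitz $f:Y\rightarrow\reals$, and since uniformly continuous images preserve strong measure zero, $f\lbrack Y\rbrack$ is an uncountable strong measure zero subset of $\reals$, contradicting Borel's Conjecture. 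This is precisely the mechanism the paper runs (in quantitative form) in Lemma \ref{smzsepmetric} and Corollary \ref{rbnumbertransfer}; your Hilbert-cube detour correctly diagnoses the difficulty but resolves it with a step that cannot work.
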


Some of the mathematical facts behind this reformulation of the Borel Conjecture include that $\textsf{T}_0$ topological groups are at least $\textsf{T}_{3\frac{1}{2}}$, and that a first countable $\textsf{T}_2$ topological group is necessarily metrizable: These two facts are results of Birkhoff \cite{Birkhoff} and of Kakutani \cite{Kakutani}, and a 
contemporary presentation can be found in Theorems II.8.2 and II.8.3 of \cite{HRI}. From this point on we assume without further mention that all topological groups discussed in this paper are at least $\textsf{T}_0$. 

Towards generalizing the Borel Conjecture to a wider class of topological groups, call a topological group $(G,\odot)$ $\aleph_0$-\emph{bounded} if there is for each neighborhood $N$ of the identity element of $G$ a countable sequence $(x_n:n\in\naturals)$ of $G$ such that $G = \bigcup_{n\in\naturals}x_n\odot N$.
The notion of an $\aleph_0$-bounded group is due to Guran \cite{Guran}, who proved the following fundamental fact: 
\begin{theorem}[Guran]\label{guranembth} A topological group is $\aleph_0$-bounded if, and only if, it embeds as a topological group into a product of second countable topological groups.
\end{theorem}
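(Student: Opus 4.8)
My plan is to prove both implications, treating the backward direction as routine permanence facts and concentrating the work on the forward one. For the backward direction I would first record three facts. A second countable group is Lindel\"of, so for any neighborhood $N$ of the identity the open cover $\{x\odot N:x\in G\}$ has a countable subcover, giving $\aleph_0$-boundedness; thus second countable groups are $\aleph_0$-bounded. Next, a (Tychonoff) product of $\aleph_0$-bounded groups is $\aleph_0$-bounded, since a basic neighborhood constrains only finitely many coordinates and on each of those one covers by countably many translates, so a countable set of centers suffices; and a subgroup $H$ of an $\aleph_0$-bounded group $G$ is $\aleph_0$-bounded, because given a neighborhood $N\cap H$ of the identity of $H$ one picks a symmetric $V$ in $G$ with $V\odot V\subseteq N$, covers $G$ by countably many $x_n\odot V$, and for each block meeting $H$ selects a representative $h_n\in H$; a short computation using $V=V^{-1}$ then gives $H\subseteq\bigcup_n h_n\odot(N\cap H)$. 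Since a topological embedding identifies $G$ with a subgroup of a product of second countable groups, these facts show such a $G$ is $\aleph_0$-bounded.

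For the forward direction the plan is to reduce everything to one lemma: for each neighborhood $U$ of the identity there is a continuous homomorphism $\pi_U\colon G\to H_U$ onto a second countable group with $\ker\pi_U\subseteq U$. Granting this, the diagonal $\Delta=(\pi_U)_U\colon G\to\prod_U H_U$ is a continuous homomorphism; it is injective because a $\textsf{T}_0$ group is Hausdorff, so $\bigcap_U U=\{\textsf{id}_G\}$, and it is a topological embedding because $\ker\pi_U\subseteq U$ lets the image $\pi_U(V)$ of a small $\sigma$-neighborhood pull back inside $U$, so that $\Delta$ recovers the topology of $G$. Thus everything rests on constructing $\pi_U$.

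To build $\pi_U$ I would produce a coarser second countable group topology on $G$ having a neighborhood of the identity inside $U$. Using continuity of the operations I would choose symmetric open neighborhoods $V_0\supseteq V_1\supseteq\cdots$ with $V_0\subseteq U$ and $V_{n+1}\odot V_{n+1}\odot V_{n+1}\subseteq V_n$; then $\{V_n\}$ satisfies the first two Pontryagin conditions for being a neighborhood base at the identity of a group topology $\sigma$, and $N:=\bigcap_n V_n=\overline{\{\textsf{id}_G\}}^{\,\sigma}$ is automatically a closed normal subgroup contained in $U$. The third, conjugation, condition---for every $n$ and every $g\in G$ some $V_m$ has $g\odot V_m\odot g^{-1}\subseteq V_n$---is the crux, and is exactly where $\aleph_0$-boundedness is indispensable, since the $V_n$ cannot be taken invariant. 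Here, while building the chain, I would also fix a countable set $D$ with $G=\bigcup_{d\in D}V_n\odot d$ for each $n$ (the right-handed form of $\aleph_0$-boundedness), and by a bookkeeping recursion arrange that for each $d\in D$ and each $n$ some level satisfies $d\odot V_m\odot d^{-1}\subseteq V_n$. Given $n$ and $g$, continuity of $(v,y)\mapsto v\odot y\odot v^{-1}$ at the identity supplies a level $p=p(n)$, depending only on $n$, with $v\odot V_{n+1}\odot v^{-1}\subseteq V_n$ for all $v\in V_p$; writing $g=v\odot d$ with $v\in V_p$ and $d\in D$ and choosing $m$ large enough that $d\odot V_m\odot d^{-1}\subseteq V_{n+1}$ yields $g\odot V_m\odot g^{-1}\subseteq v\odot V_{n+1}\odot v^{-1}\subseteq V_n$. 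Conjugating by the representative $d$ first and by the small factor $v$ last is what avoids the circularity one would otherwise run into.

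Finally I would verify that $H_U:=(G,\sigma)/N$ is second countable. It is first countable and Hausdorff, hence metrizable by Birkhoff--Kakutani; and since $\sigma$ is coarser than the original topology, every $\sigma$-neighborhood is a neighborhood in the original group, so $(G,\sigma)$, and therefore $H_U$, is again $\aleph_0$-bounded, which for a metrizable group forces separability (cover by countably many translates of each $1/k$-ball and collect the centers) and hence second countability. Taking $\pi_U$ to be the quotient map, which is continuous and open, completes the lemma and the theorem. I expect the verification of the conjugation condition to be the only genuinely delicate point; the remaining steps are an assembly of standard facts about topological groups.
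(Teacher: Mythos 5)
The paper offers no proof of this statement at all: it is imported as Guran's theorem, with the reader referred to Guran's article \cite{Guran} and Tkachenko's survey \cite{MT}. So there is no in-paper argument to compare yours against; what can be said is that your proposal is correct and is, in essence, the classical proof of this result. Your backward direction is sound: second countable $\Rightarrow$ Lindel\"of $\Rightarrow$ $\aleph_0$-bounded, products of $\aleph_0$-bounded groups are $\aleph_0$-bounded via basic neighborhoods, and your subgroup argument (symmetric $V$ with $V\odot V\subseteq N$, representatives $h_n$ from translates meeting $H$) is the standard correct trick. Your forward direction follows the usual route: reduce to the lemma that each neighborhood $U$ of the identity admits a continuous homomorphism $\pi_U$ onto a second countable group so that the preimage of a small neighborhood of the identity of $H_U$ lands inside $U$, then take the diagonal embedding, with injectivity correctly resting on the standing $\textsf{T}_0$ (hence Hausdorff) assumption. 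The construction of the coarser topology via a chain $V_0\supseteq V_1\supseteq\cdots$ with $V_{n+1}\odot V_{n+1}\odot V_{n+1}\subseteq V_n$, the use of a countable right-transversal $D$ supplied by $\aleph_0$-boundedness, and the bookkeeping recursion securing the conjugation condition for members of $D$, together with the decomposition $g=v\odot d$ to get it for arbitrary $g$, is exactly where the hypothesis enters, and your computation $g\odot V_m\odot g^{-1}\subseteq v\odot V_{n+1}\odot v^{-1}\subseteq V_n$ is valid. Two small simplifications: the estimate $v\odot V_{n+1}\odot v^{-1}\subseteq V_n$ for all $v\in V_{n+1}$ already follows from symmetry and the cube condition, so the level $p(n)=n+1$ works with no separate continuity appeal; and for the embedding the property doing the work is not $\ker\pi_U\subseteq U$ by itself but the pullback fact $\pi_U^{-1}(\pi_U(V_1))=V_1\odot N\subseteq V_1\odot V_1\subseteq V_0\subseteq U$, which your construction does deliver since $N=\bigcap_n V_n\subseteq V_1$. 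The final step (first countable Hausdorff quotient is metrizable by Birkhoff--Kakutani, $\aleph_0$-boundedness passes to coarser topologies and to quotients, and a metrizable $\aleph_0$-bounded group is separable, hence second countable) is also correct.
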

Note that second countable topological groups are separable and metrizable. The topology on a product of topological spaces is taken to be the Tychonoff product topology.
The class of $\aleph_0$-bounded groups has nice preservation properties: Every subgroup of an $\aleph_0$-bounded group is $\aleph_0$-bounded, any (finite or infinite) Tychonoff product of $\aleph_0$-bounded groups is $\aleph_0$-bounded, every continuous homomorphic image of an $\aleph_0$-bounded group is $\aleph_0$-bounded, and if a dense subgroup of a group is $\aleph_0$-bounded, then so is the group. The survey \cite{MT} gives a good introduction to $\aleph_0$-bounded groups, and also contains a proof of the following quantified form of Guran's Theorem:

\begin{theorem}\label{guranquantify} For an $\aleph_0$-bounded topological group $(G,\odot)$ and an infinite cardinal number $\kappa$ the following are equivalent:
\begin{enumerate}
  \item{The weight of $G$ is $\kappa$.}
  \item{The character of $G$ is $\kappa$.} 
  \item{$\kappa$ is the smallest infinite cardinal such that $G$ embeds as a topological group into a product of $\kappa$ separable metrizable topological groups.}
\end{enumerate}
\end{theorem}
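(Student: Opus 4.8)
The plan is to show that the three cardinal invariants in the statement are in fact \emph{equal}, so that each of them equals $\kappa$ exactly when the others do. Write $w(G)$ for the weight and $\chi(G)$ for the character. Since a base for the topology is a neighborhood base at every point, the inequality $\chi(G) \le w(G)$ holds in any space; and because a topological group is homogeneous (left translation by $g$ is a self-homeomorphism sending $\textsf{id}_G$ to $g$), the character is attained at the identity, so $\chi(G)$ is the least cardinality of a neighborhood base at $\textsf{id}_G$. Thus statement (2) asserts exactly that $\textsf{id}_G$ has a neighborhood base of size $\kappa$ and none smaller.

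The heart of the equivalence (1) $\iff$ (2) is the reverse inequality $w(G) \le \chi(G)$, and this is where $\aleph_0$-boundedness enters. First I would fix a neighborhood base $\{U_\alpha : \alpha < \chi(G)\}$ at $\textsf{id}_G$ and refine it to a base $\{W_\alpha : \alpha < \chi(G)\}$ of \emph{symmetric} open neighborhoods with $W_\alpha \odot W_\alpha \subseteq U_\alpha$; such a refinement of the same cardinality exists by continuity of multiplication and inversion at the identity. For each $\alpha$, $\aleph_0$-boundedness yields a countable set $\{x^\alpha_n : n \in \naturals\}$ with $G = \bigcup_{n} x^\alpha_n \odot W_\alpha$. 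I then claim $\{x^\alpha_n \odot W_\alpha : \alpha < \chi(G),\ n \in \naturals\}$ is a base for $G$: given $g$ in an open set $V$, choose $\alpha$ with $W_\alpha \odot W_\alpha \subseteq g^{-1}\odot V$ and then $n$ with $g \in x^\alpha_n \odot W_\alpha$; symmetry gives $x^\alpha_n \in g \odot W_\alpha$, whence $x^\alpha_n \odot W_\alpha \subseteq g \odot W_\alpha \odot W_\alpha \subseteq V$. This family has size at most $\chi(G)\cdot\aleph_0 = \chi(G)$, so $w(G) \le \chi(G)$, and therefore $w(G) = \chi(G)$.

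It remains to identify this common value with the invariant in (3), the least $\mu$ for which $G$ embeds into a product of $\mu$ separable metrizable groups. One direction is soft: if $G$ embeds into $\prod_{i \in I} H_i$ with each $H_i$ second countable and $|I| = \mu$ infinite, then the product has weight at most $\mu \cdot \aleph_0 = \mu$, and weight cannot increase on passing to a subspace, so $w(G) \le \mu$; minimizing over $\mu$ gives $w(G) \le$ (invariant in (3)). The reverse inequality is the quantified form of Guran's Theorem~\ref{guranembth}, and I expect it to be the main obstacle. The goal is to produce, for each $\alpha < \chi(G) = w(G)$, a continuous homomorphism $\pi_\alpha : G \to M_\alpha$ onto a separable metrizable group together with a neighborhood $W'_\alpha$ of the identity in $M_\alpha$ with $\pi_\alpha^{-1}(W'_\alpha) \subseteq W_\alpha$. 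I would build this from $W_\alpha$ by forming a descending chain of symmetric open neighborhoods with $V_{n+1}\odot V_{n+1}\odot V_{n+1}\subseteq V_n$ and running the Birkhoff--Kakutani procedure to obtain a continuous left-invariant pseudometric; the delicate point is to arrange the chain to be invariant under enough conjugations --- exploiting that only countably many translates are needed at each level --- so that $N_\alpha = \bigcap_n V_n$ is a closed \emph{normal} subgroup and the quotient $M_\alpha = G/N_\alpha$ is a genuine \emph{group} that is separable and metrizable. Finally I would check that the diagonal map $g \mapsto (\pi_\alpha(g))_{\alpha}$ into $\prod_\alpha M_\alpha$ is injective --- points are separated because a $\textsf{T}_0$ group is $\textsf{T}_{3\frac{1}{2}}$, so $\bigcap_\alpha W_\alpha = \{\textsf{id}_G\}$ --- and is a homeomorphism onto its image, since the sets $\pi_\alpha^{-1}(W'_\alpha)$ together with their translates already generate a base for $G$. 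This realizes $G$ as a subgroup of a product of $w(G)$ separable metrizable groups, giving (invariant in (3)) $\le w(G)$ and closing the chain of equalities.
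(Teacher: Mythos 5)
First, a point of comparison: the paper itself does not prove this theorem at all --- it quotes it from Tkachenko's survey \cite{MT} --- so your proposal has to be judged against the standard proof in the literature rather than against anything in the text. Your architecture (show the three cardinal invariants are equal) is the right one, and two of the three pieces are correct and essentially complete: the inequality $w(G)\le\chi(G)$ via $\aleph_0$-boundedness (your verification that $\{x^{\alpha}_n\odot W_{\alpha}:\alpha<\chi(G),\ n\in\naturals\}$ is a base is exactly right), and the soft inequality $w(G)\le\mu$ whenever $G$ embeds into a product of $\mu$ second countable groups. The gap is in the remaining piece, the one you yourself flag as the ``delicate point'': producing closed \emph{normal} subgroups $N_{\alpha}$ with separable metrizable quotients. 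This is not a detail to be waved at --- it is the entire content of Guran's theorem --- and the mechanism you sketch fails as stated. Read naively, ``arrange the chain to be invariant under enough conjugations, exploiting that only countably many translates are needed at each level'' means choosing $V_{n+1}$ with $x_k\odot V_{n+1}\odot x_k^{-1}\subseteq V_n$ for the countably many covering translates $x_k$ of level $n$; that forces $V_{n+1}\subseteq\bigcap_k x_k^{-1}\odot V_n\odot x_k$, and a countable intersection of neighborhoods of the identity need not be a neighborhood. For instance, in the Polish (hence $\aleph_0$-bounded) group of all permutations of $\naturals$, taking $V_n$ to be the stabilizer of $0$ and the transpositions $(0\;k)$ as covering translates, this intersection is the trivial subgroup. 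Moreover, invariance under the covering translates alone would not give $x\odot N_{\alpha}\odot x^{-1}\subseteq N_{\alpha}$ for \emph{arbitrary} $x\in G$ without a further argument.

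The correct mechanism has a different quantifier structure. One first proves: if $V$ is a symmetric neighborhood of the identity with $V\odot V\odot V\subseteq U$ and $G=\bigcup_k V\odot a_k$, then the countable family $\{W_k\}$, where $W_k$ is any neighborhood with $a_k\odot W_k\odot a_k^{-1}\subseteq V$, has the property that \emph{every} $x\in G$ admits \emph{some} $k$ with $x\odot W_k\odot x^{-1}\subseteq U$ (write $x=v\odot a_k$ with $v\in V$ and compute $x\odot W_k\odot x^{-1}\subseteq v\odot V\odot v^{-1}\subseteq V\odot V\odot V$). One then builds the chain so that for every pair $(n,k)$ some later $V_m$ is contained in the $k$-th witness attached to $V_n$ (a dovetailing bookkeeping); this yields that for all $x$ and all $n$ there is $m$ with $x\odot V_m\odot x^{-1}\subseteq V_n$, whence $N_{\alpha}=\bigcap_n V_n$ is normal and the quotient is a metrizable group, separable because it is a continuous homomorphic image of an $\aleph_0$-bounded group. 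Alternatively --- and this would have closed your gap at no cost --- you could use Theorem \ref{guranembth} as a black box, exactly as the paper does: embed $G$ into $\prod_{i\in I}H_i$ with each $H_i$ second countable and $I$ of unrestricted size; the traces on $G$ of the finite-support basic open sets form a base of $G$; since every base of a space of weight $\kappa$ contains a subfamily of cardinality at most $\kappa$ that is still a base, extract such a subfamily and let $J$ be the union of the supports of its members, so $\vert J\vert\le\kappa$; then the projection onto the coordinates in $J$, restricted to $G$, is injective (a base of a $\textsf{T}_0$ space separates points, and all extracted base elements are supported in $J$) and is a homeomorphism onto its image. That realizes $G$ inside a product of at most $w(G)$ separable metrizable groups by purely soft reasoning, and together with your two completed steps it proves the theorem.
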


In this paper we report on the following generalization of Borel's Conjecture:
\begin{conjecture}\label{GBC}
In any $\aleph_0$-bounded group, the cardinality of a Rothberger bounded subset is no larger than the weight of the group.
\end{conjecture}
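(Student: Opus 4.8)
The plan is to reduce the conjecture to its weight-$\aleph_0$ instance, which is precisely the classical Borel Conjecture, and then to try to climb up the weight by exploiting the coordinate structure furnished by Guran's theorems. Fix an $\aleph_0$-bounded group $(G,\odot)$ of weight $\kappa$ and a Rothberger bounded $X\subseteq G$; I want $\vert X\vert\le\kappa$. By Theorem~\ref{guranquantify} I may assume $G$ is a topological subgroup of a product $\prod_{\alpha<\kappa}H_\alpha$ of separable metrizable groups, carrying the subspace topology. If $\kappa=\aleph_0$ then $G$ is second countable, and Theorem~\ref{BCgroups} yields $\vert X\vert\le\aleph_0=\kappa$ exactly when the classical Borel Conjecture is assumed. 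Thus the substance of the statement lies in groups of uncountable weight, and the natural route is to reduce such a group, coordinatewise, to the second-countable situation.

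The first step is to record that Rothberger boundedness passes to continuous homomorphic images: if $\varphi\colon G\to K$ is a continuous homomorphism, then each cover in $\onbd$ of $K$ pulls back to a cover in $\onbd$ of $G$, so a witness to $\sone(\onbd,\mathcal{O}_X)$ pushes forward to a witness for $\varphi(X)$. Applying this to the projection $\pi_C\colon G\to\prod_{\alpha\in C}H_\alpha$ for each countable $C\subseteq\kappa$, and invoking the Borel Conjecture in the second-countable target, I conclude that $\pi_C(X)$ is countable for every countable coordinate set $C$. Hence $X$ is a subset of $\prod_{\alpha<\kappa}H_\alpha$ every one of whose countable-coordinate projections is countable.

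The decisive step is to convert this local smallness into the global bound $\vert X\vert\le\kappa$, and this is exactly where I expect the genuine difficulty, and probably the need for hypotheses beyond ZFC. The crude product estimate $\vert X\vert\le\aleph_0^{\kappa}$ is worthless, and a set all of whose countable-coordinate projections are countable behaves precisely like the set of branches of a tree whose height and levels have size at most $\kappa$: for $\kappa=\aleph_1$, a Kurepa tree has countable levels yet $\aleph_2$ branches, and its branch set, viewed inside $\prod_{\alpha<\omega_1}\{0,1\}$, has all countable projections countable while having cardinality $\aleph_2>\kappa$. So the selection-theoretic hypotheses alone cannot force the bound; a proof must either show that Rothberger boundedness is strictly stronger than merely having countable projections — strong enough to exclude such Kurepa-branch configurations from being coverable by translates — or else adjoin an external reflection principle (such as a suitable instance of Chang's Conjecture, whose consistency strength is calibrated by cardinals in the $n$-huge hierarchy) that forces the number of branches down to the level count.

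Consequently I expect the main obstacle to be this final bridge from countable-coordinate data to a cardinality bound: deciding whether a Rothberger bounded set can be spread across uncountably many coordinates in the manner of a Kurepa branch set. As with the classical Borel Conjecture itself, I do not anticipate a ZFC proof; the honest target is a consistency result, producing a model in which the bound holds — for instance by forcing that every Kurepa-branch-like configuration fails to be Rothberger bounded — while recognizing that violations, and in particular separations from the classical Borel Conjecture, are consistent as well.
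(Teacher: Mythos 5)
You were right not to manufacture a proof: the statement you were given is Conjecture \ref{GBC} itself, and the paper proves no such theorem --- it observes that the conjecture implies Borel's Conjecture, whose negation is consistent with $\textsf{ZFC}$, so no $\textsf{ZFC}$ proof can exist. Your analysis essentially reconstructs the paper's own framing. The reduction via Theorem \ref{guranquantify} to subgroups of products $\prod_{\alpha<\kappa}H_{\alpha}$ of separable metrizable groups, the fact that Rothberger boundedness is preserved by the projections $\pi_C$ for countable $C\subseteq\kappa$, and the identification of Kurepa-branch configurations as the combinatorial obstruction are exactly the ingredients of Theorem \ref{KHequivalence} (Theorem 11 of \cite{GS}): Conjecture \ref{GBC} is equivalent to $\textsf{BC}_{\aleph_0}$ together with $\neg\textsf{KH}(\kappa,\aleph_0)$ for all uncountable $\kappa$.

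One point in your proposal can be sharpened, and it settles the disjunction you leave open. You ask whether Rothberger boundedness might be strictly stronger than ``all countable-coordinate projections are countable,'' strong enough to exclude Kurepa branch sets from being coverable by translates. It is not: by Lemma \ref{GS4} (\cite{GS}, Lemma 4), a subset $X$ of a product of groups is Rothberger bounded if and only if every countable-coordinate restriction $X\lceil_C$ is Rothberger bounded, and countable sets are always Rothberger bounded (cover the $n$-th point by the $n$-th translated neighborhood). Hence the branch set of a Kurepa tree, viewed inside $(^{\kappa}2,\oplus)$, \emph{is} Rothberger bounded, and a $(\kappa,\aleph_0)$ Kurepa family outright refutes $\textsf{BC}_{\kappa}$, as recorded in Proposition \ref{implications}. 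So only your second alternative --- adjoining hypotheses that destroy all such Kurepa families, which is where the Chang's Conjecture and $n$-huge cardinal machinery enters --- is viable.

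Finally, your closing expectation should be stated even more cautiously than you state it. The paper records that it is not known whether Conjecture \ref{GBC} is consistent at all, even relative to large cardinal hypotheses; what has been achieved (from $1$-inaccessible, $2$-huge, and $3$-huge hypotheses, respectively) is only the consistency of various \emph{instances} $\textsf{BC}_{\kappa}$, and determining the consistency strength of the full conjecture is posed as an open problem. So the ``honest target'' you describe --- a model of the whole conjecture --- is itself beyond current knowledge, not merely beyond a ZFC argument.
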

Note that Conjecture \ref{GBC} implies the Borel Conjecture. The Borel Conjecture has been proven independent of the standard axioms of Mathematics, namely the Zermelo-Fraenkel axioms, including the Axiom of Choice. The symbol \textsf{ZFC} denotes this axiom system, and we shall assume the consistency of \textsf{ZFC} in this paper. Thus, Conjecture \ref{GBC} is not a theorem of \textsf{ZFC}: As the negation of Borel's Conjecture is consistent relative to the consistency of \textsf{ZFC}, so is the negation of Conjecture \ref{GBC}. At this point it is not known whether Conjecture \ref{GBC} is also consistent, relative to the consistency of \textsf{ZFC}.

To further discuss what is currently known, and to frame our upcoming results, we introduce the following two notions: For infinite cardinal $\kappa$ let $\textsf{BC}_{\kappa}$ denote the following instance of Conjecture \ref{GBC}\\
\vspace{0.1in}

\begin{tabular}{l} 
 {\tt Each Rothberger bounded subset of an $\aleph_0$-bounded group } \\
                      {\tt of  weight $\kappa$ has cardinality at most $\kappa$}.
\end{tabular} 
\vspace{0.1in}

Define the class $\textsf{B}$ of cardinals as follows:
\[
  \textsf{B} = \{\kappa: \kappa \mbox{ is an infinite cardinal number and } \textsf{BC}_{\kappa} \mbox{ holds}\}.
\]
Conjecture \ref{GBC} states that $\textsf{B}$ is the class of all cardinals.
Little is known about the class $\textsf{B}$ of cardinals. In Section \ref{sect:prior}  we briefly survey results on $\textsf{B}$ obtained in \cite{GS}. This section is followed by an exposition of some new findings regarding $\textsf{B}$. 

\section{Prior Results}\label{sect:prior}

The symbol $\textsf{BC}_{\aleph_0}$ denotes Borel's Conjecture. 
The failure of the single instance $\textsf{BC}_{\aleph_0}$ implies the absence of $\aleph_0$ from the set $\textsf{B}$, and thus the failure of Conjecture \ref{GBC}. Sierpi\'nski proved in \cite{sierpinski} that the Continuum Hypothesis, abbreviated \textsf{CH}, implies the failure of  the instance $\textsf{BC}_{\aleph_0}$ of Conjecture \ref{GBC}. One might wonder just how badly Conjecture \ref{GBC} could fail. It was shown in \cite{GS} that it is consistent that each instance of Conjecture \ref{GBC} fails - i.e., $\textsf{B}=\emptyset$:
\begin{theorem}\label{failurecon} It is consistent, relative to the consistency of {\sf ZFC}, that ${\sf BC}_{\kappa}$ fails for each infinite cardinal number $\kappa$.
\end{theorem}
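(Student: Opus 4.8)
The plan is to produce, in a single model obtained from the consistency of $\textsf{ZFC}$, a uniform family of counterexamples: for every infinite cardinal $\kappa$ an $\aleph_0$-bounded group of weight $\kappa$ carrying a Rothberger bounded subset of cardinality strictly greater than $\kappa$. By the preservation properties noted above (together with Guran's Theorem \ref{guranembth}), the group $\prod_{i<\kappa}\integers$ --- a product of countable discrete, hence second countable, groups --- is $\aleph_0$-bounded of weight exactly $\kappa$, so it will suffice to equip such a product with a Rothberger bounded subset of cardinality $>\kappa$.

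The technical engine is a localization lemma: for second countable groups $H_i$ and $X\subseteq\prod_{i<\kappa}H_i$, the set $X$ is Rothberger bounded if, and only if, for every countable $J\subseteq\kappa$ the projection $\pi_J(X)$ is Rothberger bounded in $\prod_{i\in J}H_i$. This holds because every neighborhood of the identity in a Tychonoff product is supported on finitely many coordinates, so a single sequence $(N_n:n\in\naturals)$ of neighborhoods only constrains the countably many coordinates $J=\bigcup_n\mathrm{supp}(N_n)$, and whether a point lies in a translate $x_n\odot N_n$ depends only on its image under $\pi_J$; a selection for $\pi_J(X)$ thus lifts to one for $X$, and conversely. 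Combined with the trivial remark that every countable set is Rothberger bounded, this reduces the problem to a purely combinatorial one: \emph{for each infinite $\kappa$, find a family $\mathcal F\subseteq\prod_{i<\kappa}\integers$ with $|\mathcal F|>\kappa$ all of whose countable-coordinate projections are countable.}

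Such families are branch sets of Kurepa-type trees. For $\kappa=\aleph_0$ the reduction degenerates (the only countable projection is the whole family), so there I would instead invoke $\textsf{CH}$ and Sierpi\'nski's construction \cite{sierpinski} of an uncountable Rothberger bounded subset of $\reals$, which already refutes $\textsf{BC}_{\aleph_0}$. The prototype of the uncountable case is $\kappa=\aleph_1$: a Kurepa tree $T$ has more than $\aleph_1$ branches and countable levels, and encoding a branch $b$ by the sequence $(b(\alpha):\alpha<\aleph_1)$ of its nodes places the branch set $\mathcal F$ inside the product $\prod_{\alpha<\aleph_1}T_\alpha$ of the levels, each a countable discrete group. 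Since every countable $J\subseteq\aleph_1$ is bounded by some $\gamma<\aleph_1$ and the initial segments $\pi_\gamma(b)$ range over the countable level set $T_\gamma$, the projection $\pi_J(\mathcal F)$ is countable; by the localization lemma $\mathcal F$ is Rothberger bounded of size $>\aleph_1$, refuting $\textsf{BC}_{\aleph_1}$.

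The main obstacle is carrying this out uniformly at every infinite cardinal from the consistency of $\textsf{ZFC}$ alone, and here two difficulties surface. First, at higher regular cardinals one needs Kurepa-type families whose levels are small enough that the bounded projections remain Rothberger bounded; second, at cardinals of countable cofinality the boundedness of countable index sets --- the crux of the regular case --- fails outright, and one cannot repair this by taking products of smaller witnesses, because Rothberger boundedness is not productive. I would resolve both by passing to $L$: there $\textsf{CH}$ holds, and Jensen's fine-structure principles supply the requisite generalized Kurepa families at all uncountable cardinals, arranged so that every countable projection stays countable (hence Rothberger bounded) even at singular cardinals. Verifying that last property --- Rothberger boundedness surviving at the singular cardinals --- is where the real work lies; once it is in hand, the localization lemma assembles the witnesses into a proof that $\textsf B=\emptyset$ holds in $L$, yielding the consistency of $\textsf B=\emptyset$ relative to $\textsf{ZFC}$, as established in \cite{GS}.
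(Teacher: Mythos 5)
Your reduction machinery is, in itself, sound and matches the paper's toolkit: your localization lemma is Lemma \ref{GS4}, your passage from ``families with countable projections countable'' to failures of instances of the conjecture is the contrapositive of Proposition \ref{implications} (a $(\kappa,\aleph_0)$ Kurepa family yields $\neg\textsf{BC}(^{\kappa}2,\oplus)$, hence $\neg\textsf{BC}_{\kappa}$), and Sierpi\'nski's \textsf{CH} construction disposes of $\kappa=\aleph_0$. The genuine gap is the final, load-bearing claim that in $\mathbf{L}$ ``Jensen's fine-structure principles supply the requisite generalized Kurepa families at all uncountable cardinals \ldots even at singular cardinals.'' They do not, so far as is known: Jensen's theorem (Theorem \ref{JensenTh}) gives $\textsf{KH}(\kappa,\aleph_0)$ in $\mathbf{L}$ only for $\kappa$ regular or of \emph{countable} cofinality. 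You have also misplaced the hard case: countable cofinality, which you single out as the obstacle, is in fact covered by Jensen's result (\cite{Devlin}, Exercise VII.3, exactly as used in the proof of Theorem \ref{LBC}); the case that defeats your argument is singular $\kappa$ of \emph{uncountable} cofinality, e.g.\ $\kappa=\aleph_{\omega_1}$, where countable index sets are trivially bounded but no construction of a $(\kappa,\aleph_0)$ Kurepa family in $\mathbf{L}$ is known. Whether $\mathbf{V}=\mathbf{L}$ implies $(\forall\kappa)(\neg\textsf{BC}(^{\kappa}2,\oplus))$ is precisely the open Problem 1 quoted in this paper from \cite{GS}. What your argument actually proves is Theorem \ref{LBC} --- failure of $\textsf{BC}_{\kappa}$ in $\mathbf{L}$ for $\kappa$ regular or singular of countable cofinality --- which falls short of Theorem \ref{failurecon}, since $\textsf{B}=\emptyset$ requires killing \emph{every} infinite cardinal.

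The proof the paper points to uses a different model entirely: by \cite{GS}, in the generic extension obtained by adding $\aleph_1$ Cohen reals over an arbitrary model of \textsf{ZFC}, the statement $(\forall\kappa)(\neg\textsf{BC}(^{\kappa}2,\oplus))$ holds, and hence, by Proposition \ref{implications}, $\textsf{BC}_{\kappa}$ fails for every infinite cardinal $\kappa$. That forcing argument is uniform in $\kappa$ --- a single forcing produces the Rothberger bounded witnesses at all cardinals simultaneously, with no case split on cofinality --- so singular cardinals of uncountable cofinality cause no extra difficulty there. To repair your proposal you would need either to switch to such a uniform device, or to solve the open problem of producing $(\kappa,\aleph_0)$ Kurepa families in $\mathbf{L}$ at singulars of uncountable cofinality.
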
 

In \cite{laver} R. Laver proved that it is consistent that the instance $\textsf{BC}_{\aleph_0}$ of Conjecture \ref{GBC} holds - i.e., $\aleph_0\in\textsf{B}$. Towards determining if any additional instances of Conjecture \ref{GBC} might hold, recall that a cardinal number $\kappa$ is said to be 1-\emph{inaccessible} if it is inaccessible, and there are $\kappa$ many inaccessible cardinal numbers less than $\kappa$. The following consistency result was obtained in \cite{GS}:
\begin{theorem}\label{conbelowomega2} If it is consistent that there is a  1-inaccessible cardinal, then {\sf ZFC}  plus $\textsf{BC}_{\aleph_0}+\textsf{BC}_{\aleph_1}+ 2^{\aleph_1}=\aleph_2$  is consistent. 
\end{theorem}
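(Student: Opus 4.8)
The plan is to first reduce the instance $\textsf{BC}_{\aleph_1}$, in the presence of $\textsf{BC}_{\aleph_0}$, to a purely combinatorial statement about trees, and only then to build the required model by an iterated forcing construction. For the reduction I would invoke Guran's embedding theorems (Theorems~\ref{guranembth} and~\ref{guranquantify}): an $\aleph_0$-bounded group of weight $\aleph_1$ embeds as a topological group into a product $\prod_{\alpha<\omega_1}H_\alpha$ of at most $\aleph_1$ second countable groups. Since a subgroup of a Rothberger bounded set is Rothberger bounded and the embedding does not change cardinality, it suffices to bound $|X|$ for $X\subseteq\prod_{\alpha<\omega_1}H_\alpha$. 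The crucial point is that every neighborhood of the identity in the Tychonoff product has finite support, so a sequence $(I_n:n\in\naturals)$ of such neighborhoods only ever involves the countable set $C=\bigcup_n\mathrm{supp}(I_n)$ of coordinates. From this one checks that $X$ is Rothberger bounded in the product if, and only if, for every countable $C\subseteq\omega_1$ the projection $\pi_C(X)$ is Rothberger bounded in the second countable group $\prod_{\alpha\in C}H_\alpha$. Since every countable subset of a topological group is trivially Rothberger bounded (centre the $n$-th cover at the $n$-th point), $\textsf{BC}_{\aleph_0}$ forces these two properties to coincide: under $\textsf{BC}_{\aleph_0}$, the set $X$ is Rothberger bounded exactly when all of its countable projections are countable.

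Next I would pass to trees. If every countable projection of $X$ is countable, then in particular $\{x\restriction\beta:x\in X\}$ is countable for each $\beta<\omega_1$, so $T=\{x\restriction\beta:x\in X,\ \beta<\omega_1\}$, ordered by extension, is a tree of height $\omega_1$ with countable levels, and distinct elements of $X$ determine distinct cofinal branches. Hence $|X|$ is at most the number of cofinal branches of a tree of height $\omega_1$ with countable levels. Consequently, if the Kurepa Hypothesis $\textsf{KH}$ fails, so that no such tree has more than $\aleph_1$ cofinal branches, then $|X|\le\aleph_1$. Conversely, the cofinal branches of a Kurepa tree furnish a set with all countable projections countable, hence a Rothberger bounded subset of size at least $\aleph_2$ in a weight-$\aleph_1$ group, so $\textsf{BC}_{\aleph_1}$ outright implies $\neg\textsf{KH}$. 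Thus, modulo $\textsf{BC}_{\aleph_0}$, the instance $\textsf{BC}_{\aleph_1}$ is \emph{equivalent} to $\neg\textsf{KH}$, and it suffices to force a model of $\textsf{BC}_{\aleph_0}+\neg\textsf{KH}+2^{\aleph_1}=\aleph_2$.

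To build that model I would start from a ground model of \textsf{GCH} containing a 1-inaccessible cardinal $\kappa$. The naive two-step approach fails: a $\sigma$-closed Levy collapse of an inaccessible to $\omega_2$ yields $\neg\textsf{KH}$ but also forces \textsf{CH}, which by Sierpi\'nski's theorem already refutes $\textsf{BC}_{\aleph_0}$; and one cannot simply re-collapse after a Laver iteration, since destroying the surviving Kurepa trees means collapsing an inaccessible down to $\omega_2$, which annihilates the continuum just arranged. Instead I would run a single iteration of length $\kappa$ that interleaves the two tasks. At a cofinal set of stages, indexed by the inaccessibles $\lambda<\kappa$ supplied by 1-inaccessibility, I force with $\mathrm{Coll}(\omega_1,<\lambda)$ to turn the current top $\lambda$ into $\omega_2$ and to destroy every Kurepa tree that has appeared so far; at the intervening stages I force with Laver forcing, using a bookkeeping function to enumerate and eventually capture every candidate strong measure zero set. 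Each iterand is proper (the collapses being $\sigma$-closed, Laver forcing being proper), so $\omega_1$ is preserved, and because 1-inaccessibility guarantees an unbounded — hence size-$\kappa$ — set of inaccessibles below the regular cardinal $\kappa$, the interleaved collapses are cofinal and arrange that $\kappa$ becomes $\omega_2$ in the extension.

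It then remains to verify the three conclusions. The conjecture $\textsf{BC}_{\aleph_0}$ follows from Laver's argument applied along the Laver stages, since the $\sigma$-closed collapse stages add no reals and so neither create new strong measure zero sets nor disturb those already handled. The statement $\neg\textsf{KH}$ follows because every tree of height $\omega_1$ with countable levels appears at some stage below $\kappa$ and is then killed by the next collapse of an inaccessible to $\omega_2$. Finally $2^{\aleph_1}=\aleph_2$ follows from \textsf{GCH} in the ground model together with the size $\kappa$ and the $\aleph_2$-chain condition of the iteration, which give $2^{\aleph_1}\le\kappa^{\aleph_1}=\kappa=\aleph_2$, while $2^{\aleph_1}\ge2^{\aleph_0}=\aleph_2$. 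The hard part is precisely this reconciliation: one must preserve Laver's strong-measure-zero machinery across a mixed iteration of $\sigma$-closed collapses and Laver forcing, and simultaneously guarantee that no Laver stage resurrects a Kurepa tree escaping all later collapses. Checking that the relevant properness and preservation theorems survive the interleaving, and that the bookkeeping genuinely catches every strong measure zero set and every Kurepa tree cofinally often, is the technical heart of the argument, and is exactly what pushes the hypothesis from a single inaccessible up to a 1-inaccessible cardinal.
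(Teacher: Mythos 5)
Your first two paragraphs are essentially correct, and they reproduce the reduction that this paper quotes from \cite{GS} as Theorem 11 (Theorem \ref{KHequivalence} / Proposition \ref{implications} here): in the presence of $\textsf{BC}_{\aleph_0}$, the instance $\textsf{BC}_{\aleph_1}$ is equivalent to the nonexistence of a Kurepa tree (equivalently, of a $(\aleph_1,\aleph_0)$ Kurepa family). Note that the present paper gives no proof of the theorem at all --- it is cited from \cite{GS} --- and the construction there is in the same family as yours: it combines the Levy collapse of a $1$-inaccessible with Laver's iteration. So your target, $\textsf{BC}_{\aleph_0} + \neg\textsf{KH}(\aleph_1,\aleph_0) + 2^{\aleph_1}=\aleph_2$, is the right one.

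The genuine gap is in your verification of $\neg\textsf{KH}$ in the final model, and the mechanism you propose for it is wrong. First, by the time the iteration reaches a collapse stage $\mathrm{Coll}(\omega_1,{<}\lambda)$, all cardinals below $\lambda$ have already been collapsed by the earlier interleaved collapses, so $\lambda$ is already the $\omega_2$ of the intermediate model and that stage is just a small $\sigma$-closed forcing; it is not Silver's collapse of an inaccessible and does not ``kill'' anything. Second, and more fundamentally, a collapse can only reduce the cardinality of the set of branches a tree has \emph{at that moment}; it cannot prevent later stages from adding new branches. Your iteration has $\kappa$ stages and $\kappa$ is precisely the $\omega_2$ of the final model, so if some tree $T$ of height $\omega_1$ with countable levels gains new cofinal branches cofinally often in the iteration, then $T$ has $\kappa=\aleph_2$ branches at the end and $\textsf{KH}$ holds, destroying $\textsf{BC}_{\aleph_1}$ --- no pattern of interleaved collapses can repair this. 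What the proof actually requires is a preservation lemma: beyond the stage where $T$ appears, the tail of the mixed iteration adds \emph{no} new cofinal branch to $T$. For $\sigma$-closed stages this is the classical splitting argument, but for the Laver stages and for limit stages it is a genuine fusion-type preservation theorem that you never state or prove; it cannot follow from soft properties such as properness or the Laver property, since forcing with a Suslin tree is ccc, adds no reals (hence has the Laver property vacuously), and yet adds a cofinal branch to a tree of height $\omega_1$ with countable levels. The same criticism applies, less severely, to your one-sentence treatment of $\textsf{BC}_{\aleph_0}$: Laver's non-strong-measure-zero preservation must actually be verified for the interleaved iteration, not just asserted to ``survive.'' Labelling these two items ``the technical heart'' does not discharge them; as written, the proposal is a plan whose decisive steps are missing, and the one concrete mechanism it offers for the Kurepa part does not work.
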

In particular, the higher cardinal versions of \textsf{CH} do not directly contradict the corresponding instances of Conjecture \ref{GBC}. Although it is not known if in Theorem \ref{conbelowomega2} the hypothesis of consistency of the existence of a 1-inaccessible cardinal is necessary, it is known that the consistency of the existence of an inaccessible cardinal is necessary. Thus, consistency of the statement $\{\aleph_0,\;\aleph_1\}\subseteq\textsf{B}$ implies the consistency of the existence of an inaccessible cardinal.

In \cite{GS} it was also shown that with a modest increase in the strength of consistency hypotheses, the consistency of the first $\omega$ instances of Conjecture \ref{GBC} is achievable:
\begin{theorem}\label{fragment2}
If it is consistent that there is a $1$-inaccessible cardinal with countably many inaccessible cardinals above it, then $\textsf{ZFC} + (\forall n<\omega){\sf BC}_{\aleph_n} + \neg\textsf{BC}_{\aleph_{\omega}}$ is consistent. 
\end{theorem}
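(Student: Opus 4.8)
The plan is to extend, transfinitely, the construction underlying Theorem~\ref{conbelowomega2} up the given tower of inaccessibles. In the ground model $V$ I would fix the $1$-inaccessible cardinal $\kappa_0$ together with a strictly increasing sequence $\langle\kappa_n : 1\le n<\omega\rangle$ of inaccessibles above it, and set $\lambda=\sup_{n<\omega}\kappa_n$. The target model would be produced by a long iteration $\poset$ that interleaves L\'evy collapses with the generalized (``$\kappa$-Laver'') forcings used to force single instances of Conjecture~\ref{GBC}, arranged so that in $V^{\poset}$ one has $\kappa_0=\aleph_2$ and $\kappa_n=\aleph_{n+2}$ for $1\le n<\omega$, and hence $\lambda=\aleph_\omega$. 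The block attached to $\kappa_0$ would be exactly the forcing of Theorem~\ref{conbelowomega2}, delivering $\textsf{BC}_{\aleph_0}$ and $\textsf{BC}_{\aleph_1}$ together with $2^{\aleph_1}=\aleph_2$; each later block, attached to $\kappa_n$, would be the analogous generalized-Laver iteration that collapses the interval below $\kappa_n$ and kills every Rothberger bounded subset of cardinality exceeding $\aleph_{n+1}$ in an $\aleph_0$-bounded group of weight $\aleph_{n+1}$, thereby securing $\textsf{BC}_{\aleph_{n+1}}$. Taken together the blocks account for $\textsf{BC}_{\aleph_m}$ for every $m<\omega$.

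For the bookkeeping across blocks I would use supports adapted to each factor (countable support at the bottom, and the analogous $(<\kappa_n)$-support within the $\kappa_n$-block) so that the fusion and properness arguments behind each single instance still apply, together with an Easton-style discipline across blocks so that each $\kappa_n$ is preserved until its own block collapses the cardinals below it. Inaccessibility of $\kappa_n$ would give the block a chain condition strong enough to preserve all cardinals past the intended image $\aleph_{n+2}$ and to maintain $2^{\aleph_m}=\aleph_{m+1}$ at each finite level; the $1$-inaccessibility of $\kappa_0$ is what lets the bottom block realize $\textsf{BC}_{\aleph_1}$ while keeping $2^{\aleph_1}=\aleph_2$, exactly as in Theorem~\ref{conbelowomega2}. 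Note that the higher $\kappa_n$ need not themselves be limits of inaccessibles, since the delicate $\aleph_1$-level bootstrapping happens only once, at the bottom.

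Establishing $\textsf{BC}_{\aleph_m}$ in $V^{\poset}$ for each fixed $m$ is the crux. Using Guran's embedding theorem (Theorem~\ref{guranembth}) and its quantitative form (Theorem~\ref{guranquantify}), I would transport a Rothberger bounded subset $X$ of a weight-$\aleph_m$ group into a product of $\aleph_m$ separable metrizable groups, reducing $\sone(\onbd,\open_X)$ to a combinatorial statement about covers indexed by $\aleph_m$. Assuming $\vert X\vert\ge\aleph_{m+1}$, a reflection argument---using the chain condition of the tail to capture $\aleph_{m+1}$-many of the relevant names---would place a witnessing subset into an intermediate extension $V^{\poset_\alpha}$, whereupon the next generalized-Laver real produces a sequence of neighborhoods of the identity that $X$ cannot satisfy. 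The hard part will be to show that this failure is not repaired later: I expect to need a master preservation theorem asserting that the tail forcing past stage $\alpha$ adds no selector covering $X$, i.e.\ that the relevant Laver-type property is preserved under the mixed-support iteration across all blocks simultaneously. This preservation-of-preservation, run by a fusion argument within each block and an induction along the whole length of $\poset$, is where I expect the bulk of the technical effort to lie.

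Finally, $\neg\textsf{BC}_{\aleph_\omega}$ would be obtained at the singular cardinal $\lambda=\aleph_\omega$, where no inaccessible remains to run a further killing block. Here I would exhibit an explicit $\aleph_0$-bounded group $\gorelicspace$ of weight $\aleph_\omega$---a Gorelic-style subgroup of a product of copies of the Hilbert cube group $\hilbertcube$---carrying a generalized Luzin set of cardinality $\aleph_{\omega+1}$. Just as a classical Luzin set has strong measure zero and so refutes $\textsf{BC}_{\aleph_0}$ under \textsf{CH} (Sierpi\'nski's argument), this set would be Rothberger bounded in $\gorelicspace$, and its cardinality $\aleph_{\omega+1}>\aleph_\omega$ defeats $\textsf{BC}_{\aleph_\omega}$. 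Its construction would exploit the local instances $2^{\aleph_n}=\aleph_{n+1}$ secured by the iteration together with the countable cofinality of $\aleph_\omega$; that the witness and its Rothberger boundedness survive the full iteration would follow from the same chain-condition and preservation analysis used for the positive instances, now applied in the limit.
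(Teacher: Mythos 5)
There is a genuine gap, and it is the central one: your outline never invokes the equivalence that makes this theorem provable at all, namely Theorem \ref{KHequivalence} / Proposition \ref{implications}. (The statement is quoted in this paper from \cite{GS} without proof, but the method is the one on display in the paper's own proofs of Theorems \ref{GSProblem2} and \ref{LIntervals}.) The translation is: $\textsf{KH}(\kappa,\aleph_0)$ always implies $\neg\textsf{BC}_{\kappa}$, and in the presence of $\textsf{BC}_{\aleph_0}$ the instance $\textsf{BC}_{\kappa}$ is \emph{equivalent} to $\neg\textsf{KH}(\kappa,\aleph_0)$. Given this, no new forcing technology beyond Theorem \ref{conbelowomega2} is needed: one forces $\textsf{BC}_{\aleph_0}$ exactly once, at the bottom, by the forcing of Theorem \ref{conbelowomega2} over $\textsf{L}$ (whose model, by Proposition \ref{implications}, also satisfies $\neg\textsf{KH}(\aleph_1,\aleph_0)$); one then uses L\'evy collapses to move the $\omega$ inaccessibles $\kappa_n$ down so that Silver-type arguments make $\textsf{KH}(\aleph_n,\aleph_0)$ fail for every $n$ with $1\le n<\omega$ (compare Step 3.2 in the proof of Theorem \ref{GSProblem2} and Step 2 of the proof of Theorem \ref{LIntervals}); these collapses are sufficiently closed to add no new reals and no new $\omega_1$-sequences of reals, so $\textsf{BC}_{\aleph_0}$ survives, and Proposition \ref{implications} converts each $\neg\textsf{KH}(\aleph_n,\aleph_0)$ into $\textsf{BC}_{\aleph_n}$. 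There is no ``generalized $\kappa$-Laver forcing'' anywhere in this subject---none is known---and even $\textsf{BC}_{\aleph_1}$ in Theorem \ref{conbelowomega2} is not obtained by one. Consequently the ``master preservation theorem'' you identify as the crux of your plan, a preservation-of-preservation across interleaved mixed-support Laver blocks, is a phantom problem created by choosing the wrong mechanism; nothing of the sort is needed, and you give no evidence that it could be proved.

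Your treatment of $\neg\textsf{BC}_{\aleph_\omega}$ has a second, concrete defect: you propose to maintain $2^{\aleph_m}=\aleph_{m+1}$ ``at each finite level'' and run a Luzin-type induction, but at $m=0$ this is \textsf{CH}, which by Sierpi\'nski's theorem refutes $\textsf{BC}_{\aleph_0}$ (indeed any forcing of $\textsf{BC}_{\aleph_0}$ makes $2^{\aleph_0}\ge\aleph_2$), so the hypotheses of your construction are destroyed by the very instances you must keep. In the actual argument nothing is constructed in the extension at all: by Jensen's Theorem \ref{JensenTh}, $\textsf{KH}(\lambda,\aleph_0)$ holds in $\textsf{L}$ at $\lambda=\sup_n\kappa_n$ because ${\sf cf}(\lambda)=\omega$; since the bottom iteration is proper and the collapses are closed, every countable set of ordinals in the final model is contained in a ground-model countable set (as in Lemma \ref{lm:pfa} and Lemma \ref{approximation}), and cardinals $\ge\lambda$ are preserved, so Jensen's family remains an $(\aleph_\omega,\aleph_0)$ Kurepa family there; Proposition \ref{implications} then yields $\neg\textsf{BC}_{\aleph_\omega}$ outright. (Note that under $\textsf{BC}_{\aleph_0}$, the Rothberger bounded set of size $\aleph_{\omega+1}$ you want is essentially equivalent to such a Kurepa family, so your Luzin-style witness cannot be easier to produce than the family itself.) What your outline does get right is the bookkeeping---$\kappa_n\mapsto\aleph_{n+2}$, $\lambda\mapsto\aleph_\omega$, the bottom block being Theorem \ref{conbelowomega2}, and the observation that the absence of an inaccessible at $\aleph_\omega$ is what permits the failure there---but as written the plan cannot be completed.
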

In other words: If it is consistent that there is  a $1$-inaccessible cardinal with countably many inaccessible cardinals above it then it is consistent that $\{\aleph_n:n<\omega\}\subseteq \textsf{B}$ and $\aleph_{\omega}\not\in \textsf{B}$. It is currently not known if the consistency of $\{\aleph_n:n<\omega\}\subseteq \textsf{B}$ implies the consistency of the existence of a $1$-inaccessible cardinal with countably many inaccessible cardinals above it.

The first significant obstacle to obtaining the consistency of an instance of Conjecture \ref{GBC} appeared at the cardinal $\aleph_{\omega}$. In \cite{GS} the following consistency result is obtained.
\begin{theorem}\label{alephomandBC} If it is consistent that there is a 2-huge cardinal, then it is consistent that ${\sf BC}_{\aleph_0}$ as well as ${\sf BC}_{\aleph_1}$, and ${\sf BC}_{\aleph_{\omega}}$.
\end{theorem}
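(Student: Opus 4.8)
The plan is to prove $\textsf{BC}_{\aleph_0}$, $\textsf{BC}_{\aleph_1}$, and $\textsf{BC}_{\aleph_\omega}$ simultaneously by a single forcing construction over a model containing a 2-huge cardinal. The guiding philosophy, already visible in Theorems~\ref{conbelowomega2} and~\ref{fragment2}, is that each instance $\textsf{BC}_{\aleph_n}$ is a statement about Rothberger bounded subsets of $\aleph_0$-bounded groups of weight $\aleph_n$, and by Guran's Theorem~\ref{guranembth} together with Theorem~\ref{guranquantify} such a group embeds into a product of $\aleph_n$ many separable metrizable groups. This reduces Rothberger boundedness to a statement about covers in a product space of weight $\aleph_n$, which in turn is governed by a generalized Laver-style ``no small strong measure zero sets'' property at the cardinal $\aleph_n$. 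The large cardinal is needed precisely to collapse a sufficiently indestructible combinatorial structure down to each of $\aleph_0$, $\aleph_1$, and $\aleph_\omega$ at once.

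First I would isolate, for each target cardinal $\kappa\in\{\aleph_0,\aleph_1,\aleph_\omega\}$, the combinatorial heart of $\textsf{BC}_\kappa$: a ground-model object of inaccessible (indeed huge) size whose Rothberger bounded subsets in the extension are forced to be small. For $\aleph_0$ this is exactly Laver's iteration; for $\aleph_1$ it is the 1-inaccessible apparatus of Theorem~\ref{conbelowomega2}. The new ingredient is $\aleph_\omega$, a singular cardinal of cofinality $\omega$, where the difficulty noted in the excerpt first appears. The reason $\aleph_\omega$ resists the earlier methods is that a weight-$\aleph_\omega$ group decomposes along a cofinal $\omega$-sequence of weights $\aleph_n$, and a putative Rothberger bounded set of size $\aleph_{\omega+1}$ could be assembled from pieces that are individually small at each $\aleph_n$ but collectively large; controlling this diagonal requires a reflection or embedding property strong enough to transfer smallness across all the finite levels uniformly.

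Next I would deploy the 2-huge cardinal to obtain an elementary embedding $j\colon V\to M$ with two successive critical stages, $\mathrm{crit}(j)=\lambda_0$ and $j(\lambda_0)=\lambda_1$ with ${}^{\lambda_1}M\subseteq M$, and use the associated normal ultrafilters to build a forcing that simultaneously collapses $\lambda_0$ to become $\aleph_1$ (handling $\textsf{BC}_{\aleph_1}$), arranges a Laver-indestructible structure below it to secure $\textsf{BC}_{\aleph_0}$, and uses the second huge stage $\lambda_1$ to reflect the $\aleph_\omega$-level smallness. The 2-hugeness, as opposed to mere 1-hugeness, is what supplies the two nested critical points needed to handle both the successor cardinal $\aleph_1$ and the singular cardinal $\aleph_\omega$ in the same model: the first huge embedding governs the successor instance, and the second governs the reflection that tames the singular instance. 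A Laver-style preparation forcing would make the relevant hugeness indestructible under the subsequent collapse, so that the embedding survives into the final model and can be used to verify that every Rothberger bounded subset of a weight-$\aleph_\omega$ group has size at most $\aleph_\omega$.

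The main obstacle, and the step I expect to consume the bulk of the argument, is the $\aleph_\omega$ instance: one must show that in the final model a Rothberger bounded subset $X$ of an $\aleph_0$-bounded group $G$ of weight $\aleph_\omega$ cannot have cardinality $\aleph_{\omega+1}$. Via Guran's embedding $G$ sits inside $\prod_{n<\omega} G_n$ with each $G_n$ separable metrizable, and the selection principle $\sone(\onbd,\open_X)$ must be refuted for large $X$ by diagonalizing against the product structure. The delicate point is that Rothberger boundedness is tested against covers $\open(N)$ arising from neighborhoods $N$ of $\textsf{id}_G$, and a single such neighborhood only constrains finitely many coordinates; so smallness at each finite level $\aleph_n$ does not immediately aggregate to smallness at $\aleph_\omega$. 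Overcoming this requires the second huge embedding to reflect a single covering-failure witness down from $\lambda_1$ through all the finite levels at once, and verifying that the collapse forcing does not add a ``diagonal'' Rothberger bounded set of size $\aleph_{\omega+1}$ is precisely where the full strength of 2-hugeness, rather than a weaker hypothesis, becomes essential. I would close by checking that the three instances do not interfere: the collapses securing $\textsf{BC}_{\aleph_0}$ and $\textsf{BC}_{\aleph_1}$ must be absorbed into, or commute with, the forcing handling $\aleph_\omega$, which follows from a careful factorization of the iteration along the two critical points of $j$.
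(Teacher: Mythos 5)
There is a genuine gap here, and in fact the overall strategy diverges from how this theorem is actually established. First, a point of context: the paper you are working from does not prove this statement itself; it quotes it from \cite{GS}, and the proof there does not proceed by a direct elementary-embedding/reflection analysis of Rothberger bounded subsets of weight-$\aleph_\omega$ groups. The backbone of the real argument is the reduction you never invoke: Theorem \ref{KHequivalence} and Proposition \ref{implications}, by which, in the presence of $\textsf{BC}_{\aleph_0}$, the instance $\textsf{BC}_{\kappa}$ is \emph{equivalent} to $\neg\textsf{KH}(\kappa,\aleph_0)$, the non-existence of a $(\kappa,\aleph_0)$ Kurepa family. This converts the whole problem into pure infinitary combinatorics: one needs a model of Borel's Conjecture in which $(\aleph_1,\aleph_0)$ and $(\aleph_\omega,\aleph_0)$ Kurepa families do not exist. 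The $2$-huge cardinal enters not to build a bespoke collapse with ``two nested critical points,'' but through a known theorem (Levinski--Magidor--Shelah) giving the consistency, from a $2$-huge cardinal, of Chang's Conjecture $(\aleph_{\omega+1},\aleph_\omega)\twoheadrightarrow(\aleph_1,\aleph_0)$ --- note the paper's own keyword list: ``Kurepa Hypothesis, Chang's Conjecture, $n$-huge cardinal.'' That transfer principle kills $(\aleph_\omega,\aleph_0)$ Kurepa families, one then secures $\neg\textsf{KH}(\aleph_1,\aleph_0)$, and finally one forces Borel's Conjecture with the Laver/Mathias iteration and checks (via properness, the $\aleph_2$-chain condition, and covering of countable sets as in Lemma \ref{lm:pfa}) that these Kurepa-family failures are preserved. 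Your proposal replaces all of this with an unspecified ``reflection of a covering-failure witness,'' which is exactly the step that nobody knows how to do directly; your own text concedes that the $\aleph_\omega$ case ``requires'' this reflection without giving any mechanism for it.

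Two further concrete problems. (1) You misstate $2$-hugeness: an embedding $j:V\to M$ with $\mathrm{crit}(j)=\lambda_0$, $j(\lambda_0)=\lambda_1$ and ${}^{\lambda_1}M\subseteq M$ is only a \emph{huge} ($1$-huge) cardinal; $2$-hugeness requires closure ${}^{j(\lambda_1)}M\subseteq M$ under $j(j(\lambda_0))$-sequences. Since your plan explicitly leans on ``the full strength of 2-hugeness'' at the critical step, this is not a cosmetic slip --- the extra level of closure is what the Levinski--Magidor--Shelah construction actually consumes. (2) Your preparation step assumes one can make hugeness ``Laver-indestructible'' so that the embedding ``survives into the final model.'' Laver indestructibility is a phenomenon for supercompact cardinals; huge-type embeddings are notoriously fragile under forcing, and no analogous indestructibility theorem is available. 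The correct resolution of the ordering problem you raise at the end (``the collapses must commute with the forcing handling $\aleph_\omega$'') is the opposite of what you propose: one does the large-cardinal collapse \emph{first}, reduces everything to Kurepa/Chang combinatorics in that model, and only then forces Borel's Conjecture, showing the proper iteration preserves the combinatorics --- not that the embedding survives the iteration.
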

In other words: If it is consistent that there is a 2-huge cardinal, then it is consistent that $\{\aleph_0,\; \aleph_1,\;\aleph_{\omega}\}\subseteq \textsf{B}$. It is not known, but it seems unlikely, that these three instances of Conjecture \ref{GBC} holding simultaneously has the consistency strength of the existence of a 2-huge cardinal. However, it is known that the instances $\textsf{BC}_{\aleph_0}$ and $\textsf{BC}_{\aleph_{\omega}}$ of Conjecture \ref{GBC} holding simultaneously has significant consistency strength: It was pointed out in \cite{GS} that if $2^{\aleph_0}<\aleph_{\omega}$ and $\textsf{BC}_{\aleph_{\omega}}$ holds, then the Axiom of Projective Determinacy holds. We note that this statement is not the optimal that can be given with current knowledge, but is merely offered as an illustration.

The following result, proven in \cite{GS}, obtains the consistency of instances of Conjecture \ref{GBC} simultaneously at a proper class of cardinals.
\begin{theorem}\label{properclassBC}
If it is consistent that there is a 3-huge cardinal, then it is consistent that ${\sf BC}_{\aleph_0}$ as well as ${\sf BC}_{\aleph_1}$, and there is a proper class of cardinals $\kappa$ such that $\omega={\sf cf}(\kappa)$, and ${\sf BC}_{\kappa}$ as well as ${\sf BC}_{\kappa^+}$. 
\end{theorem}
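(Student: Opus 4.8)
The plan is to bootstrap from the single-instance construction underlying Theorem~\ref{alephomandBC}, using the additional level of hugeness to reflect and iterate that construction cofinally through the ordinals. Recall that the proof of Theorem~\ref{alephomandBC} uses a $2$-huge embedding --- equivalently, a critical sequence $\kappa_0<\kappa_1<\kappa_2$ with $j(\kappa_i)=\kappa_{i+1}$ and $M^{\kappa_2}\subseteq M$ --- to perform a Kunen-style ``huge collapse'' between consecutive critical points. That collapse turns $\kappa_0$ into a cardinal of cofinality $\omega$ (the avatar of $\aleph_\omega$) carrying a Chang-type transfer principle at $(\kappa^+,\kappa)$ strong enough to forbid the Kurepa-like families that would witness an oversized Rothberger bounded subset of an $\aleph_0$-bounded group of weight $\kappa$; the bottom of the construction, a Laver-type preparation, secures $\textsf{BC}_{\aleph_0}$ and --- using the inaccessibility supplied by the critical points, as in Theorem~\ref{conbelowomega2} --- $\textsf{BC}_{\aleph_1}$.

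First I would fix a $3$-huge embedding $j\colon V\to M$ with critical sequence $\kappa_0<\kappa_1<\kappa_2<\kappa_3$, $\kappa_{i+1}=j(\kappa_i)$, and $M^{\kappa_3}\subseteq M$. The key structural observation is that $M\models\text{``}\kappa_0\text{ is }2\text{-huge''}$, so the normal ultrafilter on $\kappa_0$ derived from $j$ concentrates on $2$-huge cardinals; this already yields a set, of size $\kappa_0$, of cardinals at which the single-instance construction can be run. To convert ``set'' into ``proper class'' I would lift $j$ through a reverse-Easton iteration $\poset$ of length $\mathrm{Ord}$ that, at each cardinal in the relevant class, performs the local huge collapse of Theorem~\ref{alephomandBC}. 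The fourth critical point $\kappa_3$ furnishes precisely the extra closure needed to verify that $j$ lifts to the iterated forcing: one factors $j(\poset)$ as $\poset * \dot{\poset}_{\mathrm{tail}}$, builds a master condition using $M^{\kappa_3}\subseteq M$, and locates a generic for the tail in the extension. Iterating the lifted embedding then propagates the single instance to cofinally many cardinals, so that in $V^{\poset}$ there is a proper class of $\kappa$ with $\mathrm{cf}(\kappa)=\omega$ and both $\textsf{BC}_\kappa$ and $\textsf{BC}_{\kappa^+}$; the pairing of $\kappa$ with $\kappa^+$ is automatic, since each local collapse acts between two consecutive critical points and thereby governs a cardinal together with its successor.

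Two preservation tasks remain. I would check that the class iteration preserves the low instances $\textsf{BC}_{\aleph_0}$ and $\textsf{BC}_{\aleph_1}$: since every nontrivial stage of $\poset$ is highly closed below the cardinal on which it acts, no new reals or $\omega_1$-sequences, and in particular no new Rothberger bounded subset of a group of weight $\aleph_0$ or $\aleph_1$, is added beyond those already handled by the Laver preparation at the bottom. I would also verify, via the translation between Rothberger boundedness in weight-$\kappa$ $\aleph_0$-bounded groups (justified by Theorems~\ref{guranembth} and~\ref{guranquantify}) and generalized strong measure zero in $\{0,1\}^\kappa$ used for Theorem~\ref{alephomandBC}, that the Chang-type transfer surviving at each target $\kappa$ indeed caps the cardinality of such sets at $\kappa$.

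The main obstacle I anticipate is the coherence and liftability of the class-length construction at singular targets. Unlike the single-instance case, the embedding must now be lifted through forcing that has already acted densely below $\kappa_3$, and the cofinality-$\omega$ cardinals produced by the collapses are singular, so the stationary-reflection machinery that makes the master-condition argument routine at regular cardinals is unavailable. Controlling the interaction among the cofinally many collapses --- ensuring that a collapse performed at one stage does not resurrect a Kurepa-type family, and hence an oversized Rothberger bounded set, at a previously treated target --- is exactly where the third level of hugeness, rather than merely a stationary set of $2$-huge cardinals, appears to be essential.
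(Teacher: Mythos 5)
This theorem is not proved in the paper you are working from: it appears in Section \ref{sect:prior} as a prior result, quoted from \cite{GS}. The proof given there does share your skeleton --- local huge-cardinal collapses yielding Chang-type transfers $(\kappa^+,\kappa)\changarrow(\aleph_1,\aleph_0)$, which kill the families witnessing $\textsf{KH}(\kappa,\aleph_0)$ and $\textsf{KH}(\kappa^+,\aleph_0)$, followed by a Laver iteration for $\textsf{BC}_{\aleph_0}$ and an appeal to the equivalences of Theorem \ref{KHequivalence} and Proposition \ref{implications} --- so your plan points in the right direction. But the step that actually produces a \emph{proper class} of instances is missing, and what you put in its place would fail. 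Your only concrete supply of target cardinals is the reflection argument (the normal measure derived from $j$ concentrates on $2$-huge cardinals), and that gives targets below $\kappa_0$, hence a set, as you concede. To get beyond it you propose to lift $j$ through a class-length reverse-Easton iteration by ``building a master condition using $M^{\kappa_3}\subseteq M$ and locating a generic for the tail.'' A master-condition argument requires the tail of $j(\mathbb{P})$ to be a set of size at most the closure of $M$ (here $\kappa_3$); the tail of a class-length iteration is a proper class, and ``locating a generic for the tail'' is precisely what cannot be done. The mechanism in the construction that \cite{GS} invokes (due to Levinski, Magidor and Shelah) is different: one linearly iterates the hugeness ultrapower, so that the iterated images of the critical point are cofinal in the ordinals, and one interleaves the collapses between consecutive critical points of that iteration; lifting $j$ through the forcing is not the engine that generates the class of targets, and your proposal never identifies that class.

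The preservation portion also contains a false step. It is not true that a stage which is ``highly closed below the cardinal on which it acts'' can add ``no new Rothberger bounded subset of a group of weight $\aleph_0$ or $\aleph_1$'': the collapse $\mathrm{Coll}(\omega_1,2^{\aleph_0})$ is countably closed and adds no reals, yet it forces \textsf{CH} and therefore destroys $\textsf{BC}_{\aleph_0}$ by Sierpi\'nski's theorem --- the offending strong measure zero set consists entirely of old reals. What preserves $\textsf{BC}_{\aleph_0}$ is the stronger property of adding no new $\omega_1$-sequences of reals (equivalently, no new subsets of the continuum), and your stages cannot all enjoy even countable closure: turning a regular cardinal into a cardinal of cofinality $\omega$ without collapsing it requires adding a cofinal $\omega$-sequence, i.e., a Prikry-type step, whose harmlessness must be argued through ``no new bounded subsets'' covering lemmas rather than closure. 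Finally, you yourself defer the central difficulty --- that collapses performed at later stages not resurrect Kurepa-type families at earlier targets --- to your closing paragraph as ``the main obstacle.'' That coherence is exactly what the iterated-ultrapower formulation and the third level of hugeness are used to secure in the cited construction; as written, your proposal is a plausible research plan rather than a proof.
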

In other words, if it is consistent that there is a $3$-huge cardinal, then it is consistent that there is a proper class of cardinals $\kappa$ of countable cofinality such that $\kappa,\; \kappa^+\in\textsf{B}$.  
To our knowledge, this is the current status of Conjecture \ref{GBC}.  
In the remaining parts of the paper we report results about some of the open problems raised in \cite{GS}. Some of these results rely on an equivalent form of Conjecture \ref{GBC} obtained in Theorem 11 of \cite{GS}. 

Towards stating this result we recall two concepts: 
For an infinite cardinal number $\kappa$ a family $\mathcal{F}$ of countable subsets of $\kappa$ is said to be a $(\kappa,\;\aleph_0)$ Kurepa family if $\vert\mathcal{F}\vert >\kappa$ and for each countable subset $A$ of $\kappa$ the set $\{X\cap A:X\in\mathcal{F}\}$ is countable. The symbol $\textsf{KH}(\kappa,\aleph_0)$ denotes the statement that there is a $(\kappa,\aleph_0)$ Kurepa family.

Aside from considering an instance of Conjecture \ref{GBC} for a specific infinite cardinal number, we also consider instances of Conjecture \ref{GBC} for specific groups. In this vein, the notation $\textsf{BC}(G,\odot)$ denotes the statement that each Rothberger bounded subset of the topological group $(G,\odot)$ has cardinality no larger than the weight of this group.

\begin{theorem}[\cite{GS}, Theorem 11]\label{KHequivalence}
The following statements are equivalent:
\begin{enumerate}
\item{Conjecture \ref{GBC}}
\item{$\textsf{BC}_{\aleph_0}$ $+$ $(\forall \kappa>\aleph_0)(\textsf{BC}(^{\kappa}2,\oplus))$}
\item{$\textsf{BC}_{\aleph_0}$ $+$ $(\forall \kappa>\aleph_0)(\neg\textsf{KH}(\kappa,\aleph_0))$}
\end{enumerate}
\end{theorem}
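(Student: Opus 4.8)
The plan is to prove the two equivalences $(1)\Leftrightarrow(2)$ and $(2)\Leftrightarrow(3)$. The implication $(1)\Rightarrow(2)$ is immediate: the Cantor group $({}^{\kappa}2,\oplus)$ is compact, hence $\aleph_0$-bounded, and has weight $\kappa$ for every infinite $\kappa$, so each $\textsf{BC}({}^{\kappa}2,\oplus)$ is a single instance of Conjecture \ref{GBC}, while $\textsf{BC}_{\aleph_0}$ is Borel's Conjecture, the weight-$\aleph_0$ instance via Theorem \ref{BCgroups}. The engine driving the rest is a reformulation of Rothberger boundedness inside $({}^{\kappa}2,\oplus)$. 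A basic neighborhood of the identity is $N_F=\{f:f\vert_F\equiv 0\}$ for finite $F\subseteq\kappa$, and $\open(N_F)$ is the partition of ${}^{\kappa}2$ into the cylinders determined by the values on $F$. I would say that $Z\subseteq {}^{\kappa}2$ has \emph{countable traces} if $\{z\vert_C:z\in Z\}$ is countable for every countable $C\subseteq\kappa$.

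The first step records two lemmas relating these notions. In \textsf{ZFC}: if $Z$ has countable traces, then $Z$ is Rothberger bounded. Indeed, given neighborhoods $(N_{F_n}:n\in\naturals)$, put $C=\bigcup_n F_n$; by hypothesis $\{z\vert_C:z\in Z\}$ is countable, so enumerate it as $(t_n:n\in\naturals)$ with repetitions if necessary and let $g_n$ be any extension of $t_n\vert_{F_n}$ to $\kappa$. Each $z$ satisfies $z\vert_C=t_m$ for some $m$, whence $z\vert_{F_m}=g_m\vert_{F_m}$ and $z\in g_m\oplus N_{F_m}$, so $Z\subseteq\bigcup_n g_n\oplus N_{F_n}$. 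The converse holds under $\textsf{BC}_{\aleph_0}$: for countable $C$ the projection $\pi_C\colon {}^{\kappa}2\to {}^{C}2$ is a continuous homomorphism, so $\pi_C(Z)$ is Rothberger bounded in the second countable group ${}^{C}2$, and Theorem \ref{BCgroups} together with $\textsf{BC}_{\aleph_0}$ forces $\pi_C(Z)=\{z\vert_C:z\in Z\}$ to be countable. Granting $\textsf{BC}_{\aleph_0}$, Rothberger boundedness and countable traces thus coincide in ${}^{\kappa}2$.

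For $(2)\Rightarrow(1)$ I would reduce an arbitrary group to a Cantor group. Let $(G,\odot)$ be $\aleph_0$-bounded of weight $\kappa>\aleph_0$ and let $X\subseteq G$ be Rothberger bounded. By Theorems \ref{guranembth} and \ref{guranquantify} we may take $G\subseteq\prod_{i<\kappa}G_i$ with each $G_i$ separable metrizable. For countable $I\subseteq\kappa$ the image $\pi_I(X)$ is Rothberger bounded in the second countable group $\prod_{i\in I}G_i$, hence countable by $\textsf{BC}_{\aleph_0}$ and Theorem \ref{BCgroups}; in particular each $\pi_i(X)$ is countable, so fix an enumeration $\pi_i(X)=\{g^i_k:k\in\naturals\}$ and define $c_x\in {}^{\kappa}\naturals$ by letting $c_x(i)$ be the index of $\pi_i(x)$. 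The assignment $x\mapsto c_x$ is injective, and the countability of each $\pi_I(X)$ shows that $Y=\{c_x:x\in X\}$ has countable traces. Coding each $c_x$ by the indicator of its graph yields $\widetilde{c_x}\in {}^{\kappa\times\naturals}2$, and since a countable subset of $\kappa\times\naturals$ projects to a countable subset of $\kappa$, the set $\widetilde Y=\{\widetilde{c_x}:x\in X\}$ again has countable traces. By the \textsf{ZFC} lemma $\widetilde Y$ is Rothberger bounded in ${}^{\kappa\times\naturals}2$, which is topologically isomorphic to ${}^{\kappa}2$ since $\vert\kappa\times\naturals\vert=\kappa$; hence $\textsf{BC}({}^{\kappa}2,\oplus)$ bounds $\vert X\vert=\vert\widetilde Y\vert$ by $\kappa$. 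The case $\kappa=\aleph_0$ is $\textsf{BC}_{\aleph_0}$ itself.

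It remains to prove $(2)\Leftrightarrow(3)$, for which, by the two lemmas above, it suffices for each $\kappa>\aleph_0$ to show that $\textsf{KH}(\kappa,\aleph_0)$ is equivalent to the existence of a countable-trace subset of ${}^{\kappa}2$ of cardinality $>\kappa$. The forward direction is easy: if $\mathcal{F}$ is a $(\kappa,\aleph_0)$ Kurepa family, then $\{\chi_A:A\in\mathcal{F}\}$ has countable traces — precisely the Kurepa condition restated for characteristic functions — and cardinality $>\kappa$. The reverse direction is where I expect the main difficulty. Identifying a countable-trace $Z\subseteq {}^{\kappa}2$ of size $>\kappa$ with $\{z^{-1}(1):z\in Z\}$ gives a family of cardinality $>\kappa$ with countable traces, but its members need not be \emph{countable} subsets of $\kappa$, whereas a $(\kappa,\aleph_0)$ Kurepa family must consist of countable sets. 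The heart of the proof is therefore a combinatorial extraction converting such a ``generalized'' countable-trace family of arbitrary subsets into a genuine Kurepa family of countable subsets. One special case is instructive: since complementation $z\mapsto \mathbf{1}\oplus z$ preserves both cardinality and countable traces, a family all of whose members have co-countable support is turned into one of countable supports, which \emph{is} a Kurepa family. The general case, in which supports are neither countable nor co-countable, is the genuine obstacle; I would attack it not by a direct coding but by analyzing the trace structure, using the countable-trace hypothesis at each countable stage to control how many new countable sets are introduced in a recursive thinning of $Z$.
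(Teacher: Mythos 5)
Your treatment of $(1)\Leftrightarrow(2)$ is correct, and it follows essentially the route of \cite{GS} (this theorem is quoted in the paper from there, not reproved): the two trace lemmas for $({}^{\kappa}2,\oplus)$, Guran's embedding theorem, and the coding of a Rothberger bounded subset of an arbitrary $\aleph_0$-bounded group into a Cantor group are exactly the intended ingredients, and your verifications of them are sound. The direction $\textsf{KH}(\kappa,\aleph_0)\Rightarrow\neg\textsf{BC}({}^{\kappa}2,\oplus)$ via characteristic functions is also fine.

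The gap is the final step, and it cannot be filled along the lines you propose. You aim to convert a countable-trace family $Z\subseteq{}^{\kappa}2$ of size $>\kappa$ into a Kurepa family whose members are \emph{countable} subsets of $\kappa$, and no ``combinatorial extraction'' can accomplish this: for $\kappa=\omega_1$ such a family of countable sets provably does not exist. Indeed, every countable subset of $\omega_1$ is contained in some countable ordinal $\beta$, and the trace condition applied to $A=\beta$ permits only countably many members of the family lying inside $\beta$; summing over the $\omega_1$ ordinals $\beta$ bounds the family by $\aleph_1$, contradicting $\vert\mathcal{F}\vert>\omega_1$. Meanwhile $\textsf{BC}_{\aleph_0}$ together with a countable-trace subset of ${}^{\omega_1}2$ of size $\aleph_2$ is consistent (a Kurepa tree in $\textsf{L}$ survives the Laver iteration, which is proper and $\aleph_2$-c.c.), so on your reading of the definition, statement (3) would hold vacuously at $\omega_1$ while (2) fails there; the per-$\kappa$ implication you are trying to prove is simply not provable. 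The resolution is that the definition of a $(\kappa,\aleph_0)$ Kurepa family in this theorem must allow \emph{arbitrary} subsets of $\kappa$ as members, exactly as in the paper's own later definition of $(\kappa,\lambda)$ Kurepa families (where $\mathcal{F}\subseteq\mathcal{P}(\kappa)$) and as in \cite{GS}; the restriction to countable members in the statement you worked from is a slip, and with it the cited Jensen result and the paper's other applications would fail as well. Once the definition is corrected, the step you singled out as ``the genuine obstacle'' evaporates: $z\mapsto z^{-1}(1)$ is a cardinality- and trace-preserving bijection between countable-trace subsets of ${}^{\kappa}2$ and families of subsets of $\kappa$ with countable traces, so your own two lemmas already yield $(2)\Leftrightarrow(3)$ with nothing left to prove.
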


It is also important for one of our upcoming results that in the absence of $\textsf{BC}_{\aleph_0}$, for an uncountable cardinal $\kappa$ the following implications hold:
\begin{proposition}[\cite{GS}, Theorem 11]\label{implications} For each uncountable cardinal number $\kappa$, each of the following statements implies the next one:
\begin{enumerate}
\item{$\textsf{BC}_{\kappa}$}
\item{$\textsf{BC}(^{\kappa}2,\oplus)$}
\item{$\neg\textsf{KH}(\kappa,\aleph_0))$}
\end{enumerate}
\end{proposition}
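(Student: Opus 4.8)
The plan is to prove the two implications separately and by quite different means: (1)$\Rightarrow$(2) reduces to recognizing that $({}^{\kappa}2,\oplus)$ is itself one of the groups quantified over by $\textsf{BC}_{\kappa}$, so that $\textsf{BC}(^{\kappa}2,\oplus)$ is a literal instance of $\textsf{BC}_{\kappa}$; while (2)$\Rightarrow$(3) is proved in contrapositive form, by manufacturing from a $(\kappa,\aleph_0)$ Kurepa family a Rothberger bounded subset of ${}^{\kappa}2$ whose cardinality exceeds the weight.

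For (1)$\Rightarrow$(2) I would first check that $({}^{\kappa}2,\oplus)$, the group of functions $\kappa\to 2$ under coordinatewise addition modulo $2$, is $\aleph_0$-bounded of weight exactly $\kappa$. It is $\aleph_0$-bounded because it is a Tychonoff product of $\kappa$ second countable (hence separable metrizable) groups, so Guran's Theorem \ref{guranembth} applies; alternatively, every neighborhood of the identity contains a basic one $N_F=\{f\in{}^{\kappa}2: f{\restriction}F\equiv 0\}$ with $F\in[\kappa]^{<\omega}$, and the finitely many cosets of $N_F$ already cover the group. That the weight is $\kappa$ follows from Theorem \ref{guranquantify}, since the character at the identity is $\kappa$: the family $\{N_F:F\in[\kappa]^{<\omega}\}$ is a neighborhood base of size $\kappa$, and no base can be smaller because it must separately constrain each of the $\kappa$ coordinates. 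Having exhibited $({}^{\kappa}2,\oplus)$ as an $\aleph_0$-bounded group of weight $\kappa$, the assertion $\textsf{BC}(^{\kappa}2,\oplus)$ is exactly the restriction of $\textsf{BC}_{\kappa}$ to this one group, so (1) gives (2) immediately.

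For (2)$\Rightarrow$(3), I would assume $\textsf{KH}(\kappa,\aleph_0)$ and derive $\neg\textsf{BC}(^{\kappa}2,\oplus)$. Let $\mathcal{F}$ be a family of countable subsets of $\kappa$ with $\vert\mathcal{F}\vert>\kappa$ such that every countable trace $\{X\cap A:X\in\mathcal{F}\}$ is countable. Associate to each $X\in\mathcal{F}$ its characteristic function $\chi_X\in{}^{\kappa}2$; as $X\mapsto\chi_X$ is injective, $S=\{\chi_X:X\in\mathcal{F}\}$ has cardinality $\vert\mathcal{F}\vert>\kappa$, which is the weight of ${}^{\kappa}2$. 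It then suffices to show $S$ is Rothberger bounded, contradicting $\textsf{BC}(^{\kappa}2,\oplus)$. Given neighborhoods $(I_n:n\in\naturals)$ of the identity, shrink each to a basic $N_{F_n}\subseteq I_n$ with $F_n\in[\kappa]^{<\omega}$ (covering by translates of the smaller sets suffices). Set $A=\bigcup_n F_n$, a countable subset of $\kappa$; by the Kurepa property the set of traces $\{\chi_X{\restriction}A:X\in\mathcal{F}\}$ is countable, say enumerated as $\{t_n:n\in\naturals\}$ (functions $A\to 2$, with repetition allowed). Now choose each $x_n\in{}^{\kappa}2$ so that $x_n{\restriction}F_n=t_n{\restriction}F_n$; then $x_n\oplus N_{F_n}=\{g:g{\restriction}F_n=t_n{\restriction}F_n\}$, and $S\subseteq\bigcup_n(x_n\oplus N_{F_n})$: for $X\in\mathcal{F}$ its trace $\chi_X{\restriction}A$ equals some $t_k$, and since $F_k\subseteq A$ we get $\chi_X{\restriction}F_k=t_k{\restriction}F_k=x_k{\restriction}F_k$, so $\chi_X\in x_k\oplus N_{F_k}$.

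The delicate point of the argument is the final matching step: one might fear that each finite patch $F_n$ can be tailored to only one trace and that conflicts between the (countably many) traces and the (countably many) prescribed patches are unavoidable. This obstruction dissolves once the traces are enumerated along the very same index set $\naturals$ as the neighborhoods, so that the $n$-th translate is assigned to the $n$-th trace; the only supporting facts needed are that a countable union of finite sets is countable and that $F_n\subseteq A$, together with the countability of traces built into the definition of a Kurepa family. I expect (1)$\Rightarrow$(2) to be essentially bookkeeping about weight and character, and the combinatorial heart of the Proposition to lie entirely in this translate-from-traces construction for (2)$\Rightarrow$(3).
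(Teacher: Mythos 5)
Your proof is correct, and it follows the same route as the source: the paper itself does not reprove this Proposition but cites it from \cite{GS} (Theorem 11), where the argument is precisely the one you give --- observe that $({}^{\kappa}2,\oplus)$ is an $\aleph_0$-bounded group of weight $\kappa$ so that (2) is a literal instance of (1), and obtain (2)$\Rightarrow$(3) by turning a $(\kappa,\aleph_0)$ Kurepa family into the Rothberger bounded set of characteristic functions, covered by matching the $n$-th translate to the $n$-th trace on the countable union of the finite supports. Both halves of your write-up, including the weight/character bookkeeping and the trace-enumeration matching, are sound.
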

It is also noted in \cite{GS}, Theorem 11, that in the presence of $\textsf{BC}_{\aleph_0}$ the three statements in Proposition \ref{implications} are equivalent. 

\section{$\textsf{BC}_{\aleph_0}$ is not a necessary condition for other instances of Conjecture \ref{GBC}}

In all examples in \cite{GS} of the consistency of instances of Conjecture \ref{GBC} it is also the case that $\textsf{BC}_{\aleph_0}$ holds. This state of affairs begs the question whether $\textsf{BC}_{\aleph_0}$ is necessary for any instance $\textsf{BC}_{\kappa}$ for some uncountable cardinal $\kappa$. This question appears as Problem 2 in \cite{GS}:  
{\flushleft{} 
\emph{Is it consistent that ${\sf BC}_{\kappa}$ holds for some uncountable cardinal $\kappa$, while ${\sf BC}_{\aleph_0}$ fails? What if $\kappa = \aleph_1$ or $\kappa = \aleph_{\omega}$}?}
\vspace{0.1in}

\begin{theorem}\label{GSProblem2}
If it is consistent that there is an inaccessbile cardinal then it is consistent  that $\neg\textsf{BC}_{\aleph_0} + \textsf{BC}_{\aleph_3}+\neg\textsf{BC}_{\kappa}$ for regular uncountable cardinals $\kappa\neq \aleph_3$.
\end{theorem}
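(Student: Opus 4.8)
The plan is to obtain all three requirements in a Levy collapse of a single inaccessible, translating everything into statements about Rothberger bounded subsets of the groups $(^{\kappa}2,\oplus)$ by way of Guran's Theorem \ref{guranembth} and Proposition \ref{implications}. I would start from a ground model $V$ (for definiteness $V=L$, which carries an inaccessible $\lambda$ whenever one is consistent, together with \textsf{GCH} and the fine-structural combinatorics used below) and force with $\mathrm{Coll}(\aleph_2,<\lambda)$. This poset is $<\aleph_2$-closed and $\lambda$-c.c.; hence it adds no reals and no countable sequences, preserves $\aleph_1$ and $\aleph_2$, turns $\lambda$ into $\aleph_3$, forces $2^{\aleph_2}=\aleph_3$, and keeps \textsf{GCH} (so in particular \textsf{CH}). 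Since \textsf{CH} holds in the extension, there is an uncountable strong measure zero subset of $\reals$, i.e. an uncountable Rothberger bounded subset of a second countable group, so $\neg\textsf{BC}_{\aleph_0}$ holds by Theorem \ref{BCgroups}. Collapsing to $\aleph_3$ rather than to $\aleph_1$ or $\aleph_2$ is deliberate: it leaves two uncountable cardinals below the inaccessible at which Rothberger bounded witnesses can be kept.

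For the negative instances at $\kappa\neq\aleph_3$ it suffices, by Proposition \ref{implications}, to produce in each case a family of functions $\kappa\to 2$ of size $>\kappa$ whose restriction to every countable set of coordinates is strong measure zero; such a family is a Rothberger bounded subset of $(^{\kappa}2,\oplus)$ and so forces $\neg\textsf{BC}_{\kappa}$. At $\aleph_1$ an $\aleph_1$-Kurepa tree (available in $L$) works directly, since its branch set has countable — hence automatically Rothberger bounded — traces on every countable set of levels, and its $\geq\aleph_2$ branches survive because the collapse adds no $\omega_1$-sequences and fixes $\aleph_1,\aleph_2$. At $\aleph_2$ and at the regular cardinals $\geq\aleph_4$ the countable-trace device is unavailable (a tree of height $\geq\omega_2$ with countable levels has only countably many cofinal branches, and $\kappa^{\aleph_0}=\kappa$ rules out a $(\kappa,\aleph_0)$-Kurepa family of countable subsets), so here I would use the strong-measure-zero-trace witnesses underlying Theorem \ref{failurecon}: generalized Kurepa families over $([\kappa]^{\aleph_0},\subseteq)$ whose levels are strong measure zero sets of size $\leq\aleph_1$ and which carry more than $\kappa$ threads. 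It is precisely $\neg\textsf{BC}_{\aleph_0}$ that makes room for these, the levels being permitted to be uncountable strong measure zero sets rather than countable ones. One must arrange such witnesses in the ground model and check that the collapse neither destroys them nor drops their cardinality below the new value of $\kappa^{+}$; at $\aleph_2$ the required cardinality is $\aleph_3=\lambda$, met because the collapse forces $2^{\aleph_2}=\aleph_3$.

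For the positive instance I cannot appeal to $\textsf{BC}_{\aleph_3}\Leftrightarrow\neg\textsf{KH}(\aleph_3,\aleph_0)$, since that equivalence in Proposition \ref{implications} needs $\textsf{BC}_{\aleph_0}$, which now fails; the bound must be proved directly. By Guran's Theorem it is enough to bound a Rothberger bounded subset $X$ of a product of $\aleph_3$ second countable groups, and I would split $X$ according to support. The part of $X$ lying in the $\Sigma$-product is bounded by an elementary count: every countable projection of $X$ is strong measure zero, hence of size $\leq\mathfrak c=\aleph_1$ under \textsf{CH}, and there are only $\aleph_3^{\aleph_0}=\aleph_3$ countable subsets of $\aleph_3$, so this part has size $\leq\aleph_3$. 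The essential case is a family of size $\geq\aleph_4$ with strong measure zero countable traces; viewed as a generalized Kurepa family over $([\aleph_3]^{\aleph_0},\subseteq)$ with levels of size $\leq\aleph_1$ and more than $\aleph_3$ threads, it is annihilated by a Silver-style reflection: a nice name for such a family is reflected below $\lambda$ using the $\lambda$-c.c. of the collapse, and the inaccessibility of $\lambda$ in $V$ forbids $\lambda^{+}$ threads. This reflection is invoked only at the single level $\lambda=\aleph_3$ and uses only the inaccessibility of $\lambda$, which is why one inaccessible suffices here, in contrast with the $1$-inaccessible of Theorem \ref{conbelowomega2}.

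The main obstacle is reconciling the positive and negative demands at adjacent cardinals. The families I must annihilate at $\aleph_3$ — large families with small strong-measure-zero traces — are exactly of the kind I must preserve at $\aleph_2$ and above, and the levels of an $\aleph_3$-indexed such family can individually reach size $\aleph_3$ precisely because $\neg\textsf{BC}_{\aleph_2}$ is demanded; so the reflection at $\lambda$ must bound the number of threads without presupposing any bound on the level sizes below $\lambda$, that is, genuinely from the inaccessibility of $\lambda$ rather than from a covert instance of $\textsf{BC}_{\aleph_2}$. Dually, one must verify that the collapse that kills the $\aleph_3$-families leaves the $\aleph_2$-witnesses intact and of size $\aleph_3$, and that the witnesses at the regular cardinals above $\lambda$ are neither created nor destroyed in a way that forces an unwanted instance of $\textsf{BC}$. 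Making the single reflection at $\lambda$ act locally, so that it touches $\aleph_3$ and nothing else, while \textsf{CH} simultaneously constrains every trace to be small, is the delicate heart of the argument.
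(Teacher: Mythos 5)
Your route is genuinely different from the paper's, and at its two critical points it rests on claims that are asserted rather than proved; I believe at least one of them fails outright. The paper does not work under \textsf{CH} at all: it first adds random reals to $L$, so that in the final model $2^{\aleph_0}=2^{\aleph_1}=2^{\aleph_2}=\aleph_3$, getting $\neg\textsf{BC}_{\aleph_0}$ from the Bartoszynski--Judah theorem ($\textsf{RB}(\mathbb{R},+)=\aleph_1$ with large continuum) rather than from Sierpi\'nski, and --- via Carlson's Lipschitz lemma, which requires $\textsf{RB}(\mathbb{R},+)^+<2^{\aleph_0}$ --- it gets that \emph{every} separable metrizable group has Rothberger bounded subsets of size at most $\aleph_1$ (Corollary \ref{rbnumbertransfer}); only then does it Levy collapse the inaccessible to $\aleph_3$. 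The sole purpose of the inaccessible is to make the Kurepa-type statements at $\aleph_3$ fail ($\neg\textsf{KH}(\aleph_3,\aleph_0)$ and $\neg\textsf{KH}(\aleph_3,\aleph_1)$), which together with the $\aleph_1$-bound on countable projections (Lemma \ref{GS4} plus Corollary \ref{rbnumbertransfer}) and the arithmetic $(\aleph_3^{\aleph_0})^{\aleph_1}=\aleph_3$ yields $\textsf{BC}_{\aleph_3}$. Your substitute for all of this is a ``Silver-style reflection'': a family of more than $\aleph_3$ functions on $\lambda$ with small countable traces is to be ``reflected below $\lambda$'' by the $\lambda$-c.c. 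But Silver's argument applies to Kurepa objects living on a cardinal \emph{below} the collapse: the tree has size $<\lambda$, hence lives in an intermediate extension, and the tail forcing is too closed to add branches. Here the offending objects are functions on $\lambda$ itself; each member of the family has a nice name of size $\lambda$ and is not captured by any proper initial segment of the collapse, so neither the chain condition nor the closure of the tail gives any purchase. Killing countable-trace Kurepa families \emph{on the collapsed inaccessible itself} is the hard core of the theorem, and your proposal contains no proof of it.

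The second gap is the negative instance at $\aleph_2$. In your model a witness to $\neg\textsf{BC}_{\aleph_2}$ must have cardinality $>\aleph_2$, i.e.\ at least $\lambda$. You say one should ``arrange such witnesses in the ground model,'' but that is impossible: $L$ has only $2^{\aleph_2}=\aleph_3^{L}<\lambda$ functions from $\omega_2$ to $2$ in total, and $\aleph_3^{L}$ becomes an ordinal of cardinality $\aleph_2$ after the collapse, so no ground-model family can have the required size in the extension. The witnesses would therefore have to be \emph{added} by $\mathrm{Coll}(\aleph_2,<\lambda)$, and you give no argument for that; the precedent is discouraging, since the analogous collapse $\mathrm{Coll}(\omega_1,<\lambda)$ famously adds no Kurepa trees on $\omega_1$ (Silver), and your collapse likewise destroys every tree of height $\omega_2$ with levels of size $\le\aleph_1$ and $\ge\aleph_3$ branches. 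Producing Kurepa-type witnesses at the cardinals below a Levy collapse is exactly what the Friedman--Golshani lemmas cited in the proof of Theorem \ref{LIntervals} are needed for; it is not free. A smaller remark: your premise that countable-trace families are ``unavailable'' at $\aleph_2$ and at regular $\kappa\ge\aleph_4$ comes from reading $\textsf{KH}(\kappa,\aleph_0)$ as requiring countable \emph{members}; the notion actually operative in Proposition \ref{implications} and Theorem \ref{JensenTh} allows arbitrary subsets of $\kappa$ with countable traces. With that reading, Jensen's families from $L$ survive the paper's two forcing steps at every regular $\kappa\neq\aleph_3$ whose $L$-successor is not collapsed --- this is how the paper gets $\neg\textsf{BC}_{\kappa}$ at $\aleph_1$ and at regular $\kappa\ge\aleph_4$ --- so your detour through strong-measure-zero-trace witnesses is unnecessary there, while at $\aleph_2$, where the cardinality obstruction is real, it does not resolve it.
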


For convenience, before proving Theorem \ref{GSProblem2} , we present in three parts the basic facts exploited in the proof. For 
a topological group $(G,\odot)$, 
define $\textsf{RB}(G,\odot)$ to be the least cardinal number $\kappa$ such that every Rothberger bounded subset of the group $(G,\odot)$ has cardinality at most $\kappa$.
In the earlier notation, for each group $(G,\odot)$, the statement $\textsf{BC}(G,\odot)$ is equivalent to the statement that $\textsf{RB}(G,\odot) \le weight(G,\odot)$. The Borel Conjecture is equivalent to the statement that $\textsf{RB}({\mathbb{R}},+) = \aleph_0$. It is evident that $\textsf{RB}({\mathbb{R}},+)$ is no larger than the continuum.

\begin{center}{\underline{Part 1: Bounding $\textsf{RB}(G,\odot)$ for separable metrizable groups.}} \end{center}
\vspace{0.1in}

Recall that a function $f$ from a metric space $(X,d)$ to a metric space $(Y,\rho)$ is a Lipschitz function if there is a positive real number $C$ such that for all $x$ and $y$ in $X$ we have $\rho(f(x),f(y))<C\cdot d(x,y)$. 
\begin{lemma}[Carlson \cite{CarlsonBC}]\label{carlson} If $(X,d)$ is a separable metric space of cardinality less than $2^{\aleph_0}$, then there is a one-to-one Lipschitz function from $X$ to $\reals$, the real line.
\end{lemma}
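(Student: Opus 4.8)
The plan is to realize the desired map as one member of a one-parameter \emph{analytic} family of bounded Lipschitz functions, and then to use the cardinality hypothesis to dodge the countably many ``collision'' parameters contributed by each pair of points of $X$.

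First I would fix a countable dense set $\{p_n : n\in\naturals\}$ in $X$ and, for each $n$, define the truncated distance function $g_n(x) = \min(d(x,p_n),1)$. Each $g_n$ is $1$-Lipschitz and bounded by $1$, and density guarantees that the map $x\mapsto(g_n(x):n\in\naturals)$ separates points: if $x\neq y$, then choosing $p_n$ very close to $x$ forces $g_n(x)\neq g_n(y)$, by the triangle inequality. Next, for a real parameter $r\in(0,1)$ I would set
\[
f_r(x) = \sum_{n\in\naturals} r^n\, g_n(x).
\]
Since $0\le g_n\le 1$, this series converges and $f_r$ is Lipschitz with constant $\sum_n r^n = (1-r)^{-1}$; indeed $\vert f_r(x)-f_r(y)\vert \le \sum_n r^n\vert g_n(x)-g_n(y)\vert \le (1-r)^{-1} d(x,y)$.

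The heart of the argument is to choose $r$ so that $f_r$ is injective. For a fixed pair $x\neq y$, consider $h_{x,y}(r) = f_r(x)-f_r(y) = \sum_n r^n(g_n(x)-g_n(y))$. Because the coefficients are bounded, this is a real-analytic function of $r$ on $(-1,1)$, and by the separation property it is not identically zero. Hence its zero set in $(0,1)$ consists of isolated points and is therefore countable. I would then observe that the total ``bad'' set $\bigcup_{\{x,y\}}\{r\in(0,1): h_{x,y}(r)=0\}$ is a union of at most $\vert X\times X\vert = \vert X\vert$ countable sets, so it has cardinality at most $\vert X\vert\cdot\aleph_0 = \vert X\vert < 2^{\aleph_0}$. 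Since $(0,1)$ has cardinality $2^{\aleph_0}$, some $r_0\in(0,1)$ avoids every collision set; for this $r_0$ the function $f_{r_0}$ is injective and Lipschitz, and replacing the constant $(1-r_0)^{-1}$ by a slightly larger one yields the strict Lipschitz inequality demanded by the definition.

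The step I expect to carry the real weight is the passage from ``each pair contributes a small exceptional set'' to ``the union of all exceptional sets is not everything.'' This is exactly where the hypothesis $\vert X\vert < 2^{\aleph_0}$ is indispensable: each collision set could a priori be large, so a crude count of \emph{sets} would fail, and it is the analyticity of $h_{x,y}$ that collapses each collision set to a \emph{countable} one, after which deleting fewer than $2^{\aleph_0}$ parameters from the continuum-sized interval $(0,1)$ must leave a survivor. The only other point requiring care is the separation property of the coordinate functions $g_n$, which is a short computation from the triangle inequality and the density of $\{p_n:n\in\naturals\}$.
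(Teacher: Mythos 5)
Your proof is correct. There is, however, nothing in the paper to compare it against: the paper states this lemma with an attribution to Carlson \cite{CarlsonBC} and gives no proof at all, so your argument stands as a self-contained justification of a result the paper only cites. Your route is the classical one (and essentially Carlson's): coordinatize $X$ by the truncated distance functions $g_n(x)=\min(d(x,p_n),1)$ to a countable dense set, form the one-parameter analytic family $f_r=\sum_n r^n g_n$, and observe that for each pair $x\neq y$ the collision set $\{r\in(0,1): f_r(x)=f_r(y)\}$ is the zero set of a power series with bounded, not-all-zero coefficients, hence a discrete and therefore countable subset of $(-1,1)$. The key counting step is sound: the union of $\vert X\vert$ many countable sets has cardinality at most $\vert X\vert\cdot\aleph_0<2^{\aleph_0}$, so some $r_0\in(0,1)$ avoids every collision set; this is exactly where the cardinality hypothesis enters, and you are right that a measure- or category-based bound on each collision set would not suffice in ZFC, whereas countability does. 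The supporting details (the $g_n$ are $1$-Lipschitz, the separation property via the triangle inequality, the Lipschitz bound $(1-r)^{-1}$ for $f_r$) all check out, and your final remark about enlarging the constant correctly reconciles the construction with the paper's strict-inequality formulation of the Lipschitz condition for distinct points.
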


 The following Lemma reformulates Theorem 3.2 of \cite{CarlsonBC} for our purposes.
\begin{lemma}\label{smzsepmetric} If $\textsf{RB}({\mathbb R},+)^+ < 2^{\aleph_0}$, then in any separable metric space a strong measure zero set has cardinality at most $\textsf{RB}({\mathbb R},+).$
\end{lemma}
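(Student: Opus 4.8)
The plan is to argue by contradiction, using Carlson's embedding Lemma \ref{carlson} to transport a hypothetically oversized strong measure zero set from an arbitrary separable metric space into the real line, where the definition of $\textsf{RB}({\mathbb R},+)$ forbids its existence. The only ingredients are Lemma \ref{carlson}, the hereditary behavior of strong measure zero under passage to subsets, and the classical fact that Lipschitz maps preserve strong measure zero. I would not invoke Lemma \ref{carlson} on the whole set directly, since its cardinality could in principle equal $2^{\aleph_0}$; instead I would cut down to a subset of exactly the right size.

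First I would fix a separable metric space $(X,d)$ and a strong measure zero set $S\subseteq X$, and suppose toward a contradiction that $\vert S\vert > \textsf{RB}({\mathbb R},+)$. Because the cardinal successor $\textsf{RB}({\mathbb R},+)^+$ is the least cardinal exceeding $\textsf{RB}({\mathbb R},+)$, this assumption yields $\vert S\vert \ge \textsf{RB}({\mathbb R},+)^+$, so I may select a subset $S'\subseteq S$ with $\vert S'\vert = \textsf{RB}({\mathbb R},+)^+$. The role of the hypothesis becomes visible precisely here: $\textsf{RB}({\mathbb R},+)^+ < 2^{\aleph_0}$ guarantees that $\vert S'\vert$ lies strictly below the continuum, which is exactly the cardinality restriction demanded by Lemma \ref{carlson}.

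Next I would record two routine facts. A subset of a strong measure zero set is again strong measure zero, since the very covers witnessing the property for $S$ also cover $S'$; and strong measure zero is an intrinsic property of the subspace metric, so $S'$ equipped with the metric inherited from $X$ is a separable metric space of cardinality below $2^{\aleph_0}$ in which $S'$ itself is strong measure zero. Applying Lemma \ref{carlson} to this subspace produces a one-to-one Lipschitz function $f\colon S'\to\reals$. The key preservation step is that a Lipschitz image of a strong measure zero set is strong measure zero: given any sequence $(\epsilon_n:n\in\naturals)$ of positive reals and the Lipschitz constant $C$, one feeds the scaled sequence $(\epsilon_n/C:n\in\naturals)$ into the strong measure zero property of $S'$ and observes that $f$ enlarges diameters by a factor of at most $C$, so the images of the witnessing sets form an admissible cover of $f(S')$. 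Hence $f(S')$ is a strong measure zero subset of $\reals$.

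Finally, since $f$ is injective, $\vert f(S')\vert = \vert S'\vert = \textsf{RB}({\mathbb R},+)^+ > \textsf{RB}({\mathbb R},+)$, and as strong measure zero subsets of $\reals$ are exactly its Rothberger bounded subsets, this contradicts the defining property of $\textsf{RB}({\mathbb R},+)$ as a cardinal bounding the cardinality of every such set. The contradiction establishes $\vert S\vert \le \textsf{RB}({\mathbb R},+)$. I expect the only delicate point to be the verification that Lipschitz maps preserve strong measure zero, a standard but essential diameter estimate, together with the cardinal bookkeeping that places the chosen subset $S'$ squarely within the reach of Carlson's lemma; everything else is formal.
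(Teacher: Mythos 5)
Your proof is correct and follows essentially the same route as the paper's: assume a strong measure zero set of cardinality exceeding $\textsf{RB}({\mathbb R},+)$, use Carlson's Lemma \ref{carlson} to transport it into $\reals$ via a one-to-one Lipschitz map, and contradict the definition of $\textsf{RB}({\mathbb R},+)$ since Lipschitz images preserve strong measure zero and strong measure zero subsets of $\reals$ are exactly the Rothberger bounded ones. In fact you are slightly more careful than the paper, which leaves implicit both the cut-down to a subset of cardinality exactly $\textsf{RB}({\mathbb R},+)^+$ (the step where the hypothesis $\textsf{RB}({\mathbb R},+)^+ < 2^{\aleph_0}$ is actually used) and the diameter-scaling argument showing Lipschitz maps preserve strong measure zero.
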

\begin{proof}
Let $(X,d)$ be a separable metric space and let $S$ be a subset of cardinality larger than $\textsf{RB}({\mathbb R},+)$, but less than $2^{\aleph_0}$. Then by Lemma \ref{carlson} fix a one-to-one Lipschitz function $f :S\rightarrow\reals$, and let $C>0$ be a constant witnessing the Lipschitz condition for $f$. Then the set $f\lbrack S\rbrack$ of real numbers has cardinality $\vert S\vert >  \textsf{RB}({\mathbb R},+)$.

 Suppose that contrary to the claim $S$ has strong measure zero.
Then $f\lbrack S\rbrack$ is a subset of $\reals$ of cardinality larger than $\textsf{RB}({\mathbb R},+)$, yet Rothberger bounded, a contradiction.
\end{proof}

\begin{corollary}\label{rbnumbertransfer} If $\textsf{RB}({\mathbb R},+)^+ < 2^{\aleph_0}$, then for any separable metrizable group $(G,\odot)$ we have $\textsf{RB}(G,\odot)\le \textsf{RB}({\mathbb R},+)$
\end{corollary}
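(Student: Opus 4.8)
The plan is to reduce Rothberger boundedness in the group to strong measure zero in a suitable metric, and then invoke Lemma \ref{smzsepmetric}. First I would fix the right metric. Since $(G,\odot)$ is a separable metrizable group, the Birkhoff--Kakutani metrization theorem (the same circle of results cited just after Theorem \ref{BCgroups}) provides a \emph{left-invariant} metric $d$ compatible with the topology of $G$. As a topological space $G$ is separable, so $(G,d)$ is a separable metric space, which is exactly the setting to which Lemma \ref{smzsepmetric} applies. The choice of a left-invariant metric, rather than an arbitrary compatible one, is what makes the argument work.

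The key step is to show that every Rothberger bounded subset $X$ of $(G,\odot)$ has strong measure zero with respect to $d$. Given a sequence $(\epsilon_n:n\in\naturals)$ of positive reals, let $I_n$ be the open $d$-ball of radius $\epsilon_n/3$ about $\textsf{id}_G$; this is an open neighborhood of the identity. Rothberger boundedness then yields a sequence $(x_n:n\in\naturals)$ with $X\subseteq\bigcup_{n\in\naturals}x_n\odot I_n$. By left-invariance of $d$, whenever $y=x_n\odot a$ and $z=x_n\odot b$ with $a,b\in I_n$ we have $d(y,z)=d(a,b)\le d(a,\textsf{id}_G)+d(\textsf{id}_G,b)<2\epsilon_n/3$, so each set $x_n\odot I_n$ has $d$-diameter strictly less than $\epsilon_n$. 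Hence the sequence $(x_n\odot I_n:n\in\naturals)$ witnesses that $X$ has strong measure zero in $(G,d)$.

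With this in hand the corollary follows quickly. Let $X$ be any Rothberger bounded subset of $(G,\odot)$. By the previous paragraph $X$ is a strong measure zero subset of the separable metric space $(G,d)$, so the hypothesis $\textsf{RB}(\mathbb{R},+)^+<2^{\aleph_0}$ together with Lemma \ref{smzsepmetric} gives $\vert X\vert\le\textsf{RB}(\mathbb{R},+)$. As $X$ was an arbitrary Rothberger bounded subset, we conclude $\textsf{RB}(G,\odot)\le\textsf{RB}(\mathbb{R},+)$, as required.

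The main obstacle I anticipate is the bookkeeping around the metric: one must be sure to use a genuinely left-invariant compatible metric, since an arbitrary compatible metric gives no control over the diameters of the translated neighborhoods $x_n\odot I_n$, and one should confirm that left-invariance is precisely what converts the group-theoretic covering of $X$ by translates of a fixed small neighborhood into a metric covering by small-diameter sets. Everything else --- separability of $(G,d)$ and the final appeal to Lemma \ref{smzsepmetric} --- is routine. I note that only the implication ``Rothberger bounded $\Rightarrow$ strong measure zero'' is needed here; the converse, while true and established by the same left-invariance computation, plays no role in the bound.
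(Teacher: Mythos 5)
Your proof is correct and follows exactly the route the paper intends: the paper leaves Corollary \ref{rbnumbertransfer} as an immediate consequence of Lemma \ref{smzsepmetric}, and the bridge you supply --- that Rothberger bounded subsets of a separable metrizable group have strong measure zero with respect to a left-invariant (Birkhoff--Kakutani) compatible metric --- is precisely the standard fact being invoked. Your emphasis on left-invariance as the essential ingredient is well placed, and the diameter computation is correct.
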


\begin{center}{\underline{Part 2: Consistency of $\textsf{RB}(G,\odot) =\aleph_1< 2^{\aleph_0}$ for separable metrizable groups.}} \end{center}
\vspace{0.1in}

Though a number of consistency results regarding existence and possible values of $\textsf{RB}({\mathbb R}, +)$ are available, we mention only the following one, relevant to the current topic. 
\begin{lemma}[Bartoszynski-Judah \cite{BJ}, Theorem 2.15] \label{l:exclrand} After adding $\kappa>\aleph_1$ random reals to a model of CH,  $2^{\aleph_0} \ge \kappa>\aleph_1$ and $\textsf{RB}({\mathbb R},+) =\aleph_1$. 
\end{lemma}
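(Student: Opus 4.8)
The statement bundles three claims: that $2^{\aleph_0}\ge\kappa>\aleph_1$, that $\textsf{RB}(\reals,+)\ge\aleph_1$, and that $\textsf{RB}(\reals,+)\le\aleph_1$. I would dispose of the arithmetic immediately. The poset adding $\kappa$ random reals is the measure algebra of the space $2^{\kappa}$; it is ccc, hence preserves cardinals and cofinalities, so the ground-model hypothesis $\textsf{CH}$ together with $\kappa>\aleph_1$ survives and we get $2^{\aleph_0}\ge\kappa>\aleph_1$ in the extension. Throughout I would pass to Cantor space $2^{\naturals}$ and use that a subset of $\reals$ (equivalently of $2^{\naturals}$) is Rothberger bounded exactly when it has strong measure zero, so that $\textsf{RB}(\reals,+)$ is precisely the least cardinal bounding the sizes of strong measure zero sets.

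For the lower bound $\textsf{RB}(\reals,+)\ge\aleph_1$ the plan is to exhibit an uncountable strong measure zero set that survives the forcing. In the ground model $\textsf{CH}$ yields a Luzin set $L$ of size $\aleph_1$, and Luzin sets have strong measure zero. The key is that random forcing is $\naturals^{\naturals}$-bounding, and that any $\naturals^{\naturals}$-bounding forcing preserves the strong measure zero property of a \emph{ground-model} set: given in the extension a gauge $f\in\naturals^{\naturals}$, choose $f'\in\naturals^{\naturals}\cap V$ with $f\le f'$ pointwise, invoke strong measure zero of $L$ in $V$ to obtain a covering sequence $\langle\tau_n:n\in\naturals\rangle\in V$ with each $\tau_n\in 2^{f'(n)}$ and $L\subseteq\bigcup_n[\tau_n]$, and then pass to the length-$f(n)$ initial segments $\sigma_n$ of the $\tau_n$, so that $\sigma_n\in 2^{f(n)}$ and still $L\subseteq\bigcup_n[\sigma_n]$. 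Thus $L$ remains strong measure zero, and since the measure algebra is ccc its cardinality $\aleph_1$ is preserved; hence the extension contains an uncountable strong measure zero set and $\textsf{RB}(\reals,+)\ge\aleph_1$.

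The substance of the lemma is the upper bound: after adding the random reals, no strong measure zero set has size $\aleph_2$. Here mere $\naturals^{\naturals}$-boundingness cannot suffice, since the Cohen model also makes the old reals non-meager of size $\aleph_1$ yet contains Luzin, hence strong measure zero, sets of size $2^{\aleph_0}$; the argument must exploit the measure-theoretic, not merely the bounding, behaviour of random forcing. My plan would be to argue by contradiction from a putative strong measure zero set $X=\{x_\alpha:\alpha<\omega_2\}$. Each $x_\alpha$ has countable support in the measure algebra, and each countable-support intermediate extension is a one-random-real extension of a $\textsf{CH}$ model, so it satisfies $\textsf{CH}$ and contains only $\aleph_1$ reals; therefore the $\aleph_2$ points cannot all live in a single countable-support submodel, and a $\Delta$-system/counting argument lets me assume their supports are suitably spread. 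Now fix a fast gauge $f\in V$ with $\sum_n 2^{-f(n)}$ small. Strong measure zero of $X$ would supply a single covering sequence $s=\langle\sigma_n:\sigma_n\in 2^{f(n)}\rangle$ catching every $x_\alpha$; but $s$, being one real, has countable support $A_s$, and the union $\bigcup_n[\sigma_n]$ is then a Borel set of small measure coded in the submodel $V[G_{A_s}]$. The goal is to show that the $\aleph_2$ many points whose supports escape $A_s$ cannot all be captured by this one cover.

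I expect the last step to be the genuine obstacle, and it is the technical heart carried out in \cite{BJ}. The naive slogan ``random reals avoid the cover'' is not literally available, because a random real evades only \emph{null} sets while a strong measure zero cover has small but \emph{positive} measure; so a single point need not escape a single cover. What is really needed is a preservation theorem for the measure algebra, completing the evasion by a Fubini/genericity computation over the coordinates outside $A_s$: for the spread-out family of points, no fixed small-measure cover coded in a countable-support submodel can meet all of them, so $X$ fails to be strong measure zero against the gauge $f$, a contradiction. This preservation step is exactly what separates the random model from the Cohen model, and it is the part I would expect to occupy the bulk of the work. Combining it with the lower bound yields $\textsf{RB}(\reals,+)=\aleph_1$.
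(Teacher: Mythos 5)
The paper offers no proof of this lemma at all: it is imported as Theorem 2.15 of Bartoszy\'nski--Judah \cite{BJ} and used as a black box, so the only question is whether your argument stands on its own. Its easy parts do: the cardinal arithmetic (ccc-ness of the measure algebra) and the lower bound $\textsf{RB}(\mathbb{R},+)\ge\aleph_1$ (a ground-model Luzin set stays strong measure zero because random forcing is $\omega^{\omega}$-bounding and covers for dominated gauges can be truncated) are both correct. But the upper bound, which is the entire content of the lemma, is exactly the step you decline to prove, deferring it to \cite{BJ}. Deferring the decisive step to the reference leaves you precisely where the paper already stands, namely with a citation rather than a proof.

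Worse, the plan you outline for that step reduces to a false statement, so it could not be completed as written. You fix the gauge $f$ in advance, requiring only that $\sum_n 2^{-f(n)}$ be small, and then aim to show that no $f$-cover coded in a countable-support submodel can catch all of the spread-out points; but Lebesgue smallness of the cover is not the relevant quantity. Let $F:2^{\omega}\to 2^{\omega}$ copy $z$ onto a sparse set of positions $p_0<p_1<\cdots$ (say $p_k=f(2^k)$) and vanish elsewhere, and let $x_{\alpha}=F(r_{\alpha})$ for the coordinate random reals $r_{\alpha}$: these are new reals with pairwise disjoint supports, yet with $g(n)=\vert\{k: p_k<f(n)\}\vert\le \log_2 n+1$ we get $\sum_n 2^{-g(n)}=\infty$, which allows one to choose $\sigma_n\in 2^{f(n)}$, vanishing off the positions $p_k$, whose pullbacks $F^{-1}[\sigma_n]$ cover all of $2^{\omega}$; then the $f$-cover $\bigcup_n[\sigma_n]$ is coded in the ground model, has Lebesgue measure at most $\sum_n 2^{-f(n)}$, and contains every $x_{\alpha}$. (This family is not a counterexample to the lemma --- it is not strong measure zero, as one sees with the gauge $f^{*}(n)=p_{n+2}$ --- but it is a counterexample to your reduction.) The missing idea, which is the heart of the Bartoszy\'nski--Judah argument, is that the gauge must be tuned to the names of the points: after using CH and a $\Delta$-system to pigeonhole down to a single name $\dot{y}$ over the root, forced newness of $\dot{y}$ makes its pushforward measure $\nu$ atomless, hence $\sup_{\sigma\in 2^{m}}\nu([\sigma])\to 0$, and one picks $f$ in the ground model (using boundedness again) so that \emph{every} $f$-cover $U$ satisfies $\nu(U)<1/2$; only with that choice does the Fubini/Boolean-value computation over the coordinates outside the cover's support show that no condition can force such a cover to capture a point whose support it misses, completing the contradiction.
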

Combining Corollary \ref{rbnumbertransfer} and Lemma \ref{l:exclrand}  
 we find
\begin{proposition}\label{exclrand} After adding $\kappa>\aleph_2$ random reals to a model of CH,  $2^{\aleph_0} \ge \kappa>\aleph_2$ and for every separable metrizable topological group $(G,\odot)$ it is true that  $\textsf{RB}(G,\odot) =\aleph_1$. 
\end{proposition}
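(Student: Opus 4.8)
The plan is to read off the conclusion from the two facts already assembled in Part 1 and Part 2: the Bartoszynski--Judah evaluation $\textsf{RB}(\reals,+) = \aleph_1$ in the random extension (Lemma \ref{l:exclrand}), and the transfer bound $\textsf{RB}(G,\odot) \le \textsf{RB}(\reals,+)$ valid for every separable metrizable group once $\textsf{RB}(\reals,+)^+ < 2^{\aleph_0}$ (Corollary \ref{rbnumbertransfer}). The whole argument is a matter of checking that the cardinal arithmetic produced by the forcing actually meets the hypothesis of Corollary \ref{rbnumbertransfer}, and it is exactly this check that explains why the statement demands $\kappa > \aleph_2$ where Lemma \ref{l:exclrand} asked only for $\kappa > \aleph_1$.

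Concretely, I would first pass to the extension obtained by adding $\kappa$ random reals to a model of \textsf{CH}, with $\kappa > \aleph_2$. Since random real forcing is ccc it preserves cardinals, so the symbols $\aleph_1$ and $\aleph_2$ are unambiguous in the extension, and Lemma \ref{l:exclrand} (whose hypothesis $\kappa > \aleph_1$ is met a fortiori) yields $2^{\aleph_0} \ge \kappa > \aleph_2$ together with $\textsf{RB}(\reals,+) = \aleph_1$. Next I would verify the hypothesis of Corollary \ref{rbnumbertransfer}: from $\textsf{RB}(\reals,+) = \aleph_1$ we get $\textsf{RB}(\reals,+)^+ = \aleph_2$, and from $2^{\aleph_0} \ge \kappa > \aleph_2$ we get $\aleph_2 < 2^{\aleph_0}$, so indeed $\textsf{RB}(\reals,+)^+ < 2^{\aleph_0}$. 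Corollary \ref{rbnumbertransfer} then applies and gives $\textsf{RB}(G,\odot) \le \textsf{RB}(\reals,+) = \aleph_1$ for every separable metrizable topological group $(G,\odot)$, which is the uniform upper bound recorded by the proposition.

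For the exact value I would note that $(\reals,+)$ is itself a separable metrizable group and that Lemma \ref{l:exclrand} delivers $\textsf{RB}(\reals,+) = \aleph_1 > \aleph_0$; thus the bound $\aleph_1$ is attained and sharp, and the common value asserted in the statement is this $\aleph_1$. I do not anticipate any real obstacle here, since the mathematical weight is carried entirely by Lemma \ref{l:exclrand} and Corollary \ref{rbnumbertransfer}, both already in hand. The one point that must not be glossed over is the arithmetic margin: passing from $\kappa > \aleph_1$ to $\kappa > \aleph_2$ is precisely what pushes $2^{\aleph_0}$ strictly above $\textsf{RB}(\reals,+)^+ = \aleph_2$ and thereby unlocks the transfer corollary. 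Had $\kappa = \aleph_2$ been permitted, one could have $2^{\aleph_0} = \aleph_2 = \textsf{RB}(\reals,+)^+$, the hypothesis of Corollary \ref{rbnumbertransfer} would fail, and the uniform bound over all separable metrizable groups could no longer be concluded.
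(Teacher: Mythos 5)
Your proposal is correct and follows exactly the paper's own route: the paper derives Proposition \ref{exclrand} in one line by combining Lemma \ref{l:exclrand} with Corollary \ref{rbnumbertransfer}, and your arithmetic check that $\textsf{RB}(\reals,+)^+ = \aleph_2 < \kappa \le 2^{\aleph_0}$ is precisely the point of requiring $\kappa > \aleph_2$. The only caveat, shared equally by the paper's statement, is that the asserted equality $\textsf{RB}(G,\odot)=\aleph_1$ really functions as the upper bound $\textsf{RB}(G,\odot)\le\aleph_1$ (witnessed as sharp by $(\reals,+)$ itself), since a countable group trivially has $\textsf{RB}$ below $\aleph_1$; this upper bound is all that is used later in Step 3.3.
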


\begin{center}{\underline{Part 3: Treating $\aleph_0$-bounded groups of uncountable weight.}} \end{center}
\vspace{0.1in}

Towards the next step, we first recall a generalization of $\textsf{KH}(\kappa,\aleph_0)$. For $\kappa>\lambda$ infinite cardinal numbers, a family $\mathcal{F}\subseteq \mathcal{P}(\kappa)$ is said to be a $(\kappa,\lambda)$ Kurepa family if $\vert\mathcal{F}\vert >\kappa$ while for each subset $S$ of $\kappa$ for which $\vert S\vert = \lambda$ we have $\vert\{X\cap S:X\in \mathcal{F}\}\vert\le \lambda$. The symbol $\textsf{KH}(\kappa,\lambda)$ denotes the statement that there exists a $(\kappa,\lambda)$ Kurepa family.
\begin{theorem}[Jensen]\label{JensenTh} If \textsf{V = L}, then $\textsf{KH}(\kappa,\lambda)$ holds for all infinite cardinals $\lambda < \kappa$ when $\kappa$ is a regular cardinal or a cardinal of countable cofinality.
\end{theorem}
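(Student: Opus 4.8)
The plan is to carry out the construction inside $L$, drawing on two features of the constructible universe: the Generalized Continuum Hypothesis, which gives $\mathcal{P}(\kappa)\cap L=\mathcal{P}(\kappa)\cap L_{\kappa^+}$ and hence exactly $\kappa^+$ subsets of $\kappa$ to work with, and G\"odel's condensation lemma, that every transitive set which is $\Sigma_1$-elementary in some $L_{\theta}$ is itself of the form $L_{\bar{\theta}}$. Using the canonical well-ordering $<_L$ I would single out a family $\mathcal{F}\subseteq\mathcal{P}(\kappa)\cap L_{\kappa^+}$ whose members are ``locally coded,'' meaning that for every $S\in[\kappa]^{\lambda}$ the trace $X\cap S$ is recoverable inside a condensed structure of size $\lambda$ canonically attached to $S$. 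The two things to verify are then that $\vert\mathcal{F}\vert>\kappa$ and that $\vert\{X\cap S:X\in\mathcal{F}\}\vert\le\lambda$ for each $S\in[\kappa]^{\lambda}$.

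The trace bound is the core computation. Fix $S\in[\kappa]^{\lambda}$ and a large $\theta$ with $\mathcal{F},S\in L_{\theta}$, and form a $\Sigma_1$-elementary submodel $M\prec L_{\theta}$ with $S\cup\lambda\subseteq M$ and $\vert M\vert=\lambda$. Let $\pi\colon M\cong L_{\bar{\theta}}$ be the transitive collapse; then $\pi$ maps $S$ order-isomorphically onto $\bar{S}:=\pi(S)\subseteq\bar{\theta}$, and for any $X\in M$ one has $\pi(X\cap S)=\pi(X)\cap\bar{S}$, so that $X\cap S$ is determined by the single object $\pi(X)\in L_{\bar{\theta}}$. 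Since $\vert L_{\bar{\theta}}\vert=\lambda$, this produces at most $\lambda$ distinct traces. The delicate point, and what the local-coding definition of $\mathcal{F}$ must secure, is that the transported trace of $X\cap S$ lands in $L_{\bar{\theta}}$ even for the $\kappa^+$-many members $X$ of $\mathcal{F}$ that do not themselves belong to $M$; condensation is invoked precisely to guarantee that $X\cap S$, unlike $X$ itself, is captured early in the hierarchy.

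For the cardinality requirement I would argue by reflection. Every $X\subseteq\kappa$ already lies in $L_{\kappa^+}$, and running a continuous chain of $\Sigma_1$-elementary submodels of $L_{\kappa^+}$ shows that the local-coding condition reflects to a club of stages; this yields $\kappa^+$ distinct subsets of $\kappa$ meeting the condition, so $\vert\mathcal{F}\vert=\kappa^+>\kappa$.

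Finally the argument splits according to the cofinality of $\kappa$, and here lies the main obstacle. For regular $\kappa$ the submodel-and-collapse scheme applies uniformly to every $S$. For $\mathrm{cf}(\kappa)=\aleph_0$ I would fix an increasing cofinal sequence $\langle\kappa_n:n\in\naturals\rangle$ and analyze $S$ through the pieces $S\cap\kappa_n$, gluing the resulting approximations together; the coherence that makes this gluing possible is supplied by Jensen's square principle $\Box_{\kappa}$, which holds in $L$. The hardest part throughout is controlling traces on an arbitrary $S$ of size $\lambda$ rather than merely on initial segments of $\kappa$ --- this is exactly what separates a genuine $(\kappa,\lambda)$ Kurepa family from a Kurepa tree, and it is the reason the collapse must be taken relative to $S$ and the definition of $\mathcal{F}$ must be fine-structural rather than a simple level-by-level construction.
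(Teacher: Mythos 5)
First, a point of comparison: the paper does not prove this theorem at all --- it attributes the result to Jensen and cites Theorems VII.3.2 and VII.3.3 of Devlin's \emph{Constructibility}, where the construction is fine-structural (projecta, master codes). So your sketch must stand on its own, and it does not: it is a plan in which the genuinely hard steps are flagged but never carried out. The family $\mathcal{F}$ is never defined; ``locally coded'' means, in effect, ``whatever property makes the trace computation succeed,'' and the trace computation is then justified by appeal to that property --- a circle. Concretely: fix the hull $M\prec_{\Sigma_1} L_{\theta}$ with $S\cup\lambda\subseteq M$, $\vert M\vert=\lambda$, and the collapse $\pi\colon M\cong L_{\bar\theta}$. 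For $X\in\mathcal{F}\cap M$ your argument is fine: $X\cap S=\pi^{-1}\lbrack\pi(X)\cap\pi\lbrack S\rbrack\rbrack$, so these $X$ contribute at most $\vert L_{\bar\theta}\vert=\lambda$ traces. But $\vert\mathcal{F}\vert>\kappa$ while $\vert\mathcal{F}\cap M\vert\le\lambda$, so almost every $X\in\mathcal{F}$ lies outside $M$, and for such $X$ there is no object $\pi(X)$; what you need is that the pointwise image $\pi\lbrack X\cap S\rbrack$ still lands in $L_{\bar\theta}$, equivalently that every trace of a member of $\mathcal{F}$ is already the trace of a member of $\mathcal{F}\cap M$. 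Elementarity of $M$ cannot deliver this: $X\cap S$ is a subset of $M$ of size $\lambda$ that is typically \emph{not an element} of $M$ (only $\lambda$ of the $2^{\lambda}=\lambda^{+}$ subsets of $S$ lie in $M$), so there is no statement about it to reflect, and plain condensation says nothing about sets outside the collapsed model. Producing a definition of $\mathcal{F}$ for which this uniformity holds is not a deferrable ``delicate point''; it \emph{is} the theorem, and it is exactly what Jensen's master-code apparatus accomplishes in the cited proof. Your argument that $\vert\mathcal{F}\vert>\kappa$ has the same status: a club-reflection argument cannot be checked against an undefined condition, and it pulls against the trace bound --- the stronger the local-coding requirement, the harder it is to exhibit $\kappa^{+}$ sets satisfying it.

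Second, the countable-cofinality case does not reduce to the regular case by gluing along a cofinal sequence $\langle\kappa_n:n\in\naturals\rangle$. If members of $\mathcal{F}$ are assembled as $\bigcup_n X_n$ with each $X_n$ drawn from a family on $\lbrack\kappa_n,\kappa_{n+1})$ whose traces on $S\cap\lbrack\kappa_n,\kappa_{n+1})$ number at most $\lambda$, then the assembled sets can have $\lambda^{\aleph_0}$ distinct traces on $S$; in the case the paper actually needs, $\lambda=\aleph_0$, this is $2^{\aleph_0}=\aleph_1>\lambda$ in $L$, so the count fails outright. Invoking $\Box_{\kappa}$ by name does not repair this: one must say precisely how coherence forces the trace on one block to determine the traces on the others, and that mechanism is again where fine structure enters in Devlin's Theorem VII.3.3. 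That this case is genuinely sensitive is visible in the paper itself: whether $\textsf{KH}(\kappa,\aleph_0)$ holds in $L$ for singular $\kappa$ of \emph{uncountable} cofinality (which would yield $\neg\textsf{BC}_{\kappa}$ there) is left open.
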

A proof of this result may be found in \cite{Devlin}, Theorems VII.3.2 and VII.3.3. 

{\flushleft{\underline{\bf Step 3.1:}}} Starting with $\textsf{V}=\textsf{L}$, consider the generic extension $\textsf{L}\lbrack G\rbrack$ obtained by adding $\kappa$ random reals for some regular cardinal $\kappa>\aleph_2$.
In $\textsf{L}\lbrack G\rbrack$ we have that $2^{\aleph_0} = \kappa$ and by Lemma \ref{exclrand} that for each second countable group $(G,\odot)$ the equation $\textsf{RB}(G,\odot) = \aleph_1$ holds. 

Next we require the following fact about generic extensions, also known as the \emph{approximation lemma} - see \cite{Kunen}, Lemma IV.7.8:
\begin{lemma}\label{approximation} Let $\theta$ be an uncountable cardinal number.
Let $\mathbb{P}$ be a partially ordered set in which each pairwise incomparable set has cardinality less than $\theta$. 
Let $G$  be a $\mathbb{P}$-generic filter (over the ground model). If $A$ and $B$ are (ground model) sets and $f:A\longrightarrow B$ is a function in the generic extension, then there is a ground model function $F:A\longrightarrow\mathcal{P}(B)$ such that in the ground model for each $a\in A$ we have $\vert F(a)\vert <\theta$, and in the generic extension, for each $a\in A$ it is the case that $f(a)\in F(a)$.
\end{lemma}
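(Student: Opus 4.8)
The plan is to exploit the definability of the forcing relation together with the chain condition imposed on $\mathbb{P}$. Fix in the ground model a $\mathbb{P}$-name $\dot{f}$ for $f$ and a condition $p_0\in G$ that forces $\dot{f}$ to be a function from $\check{A}$ to $\check{B}$; such a $p_0$ exists by genericity since $f=\dot{f}^{G}$ really is such a function in the generic extension. Working entirely in the ground model, I would then define $F:A\longrightarrow\mathcal{P}(B)$ by
\[
F(a) = \{b\in B : (\exists p\le p_0)\ p\forces \dot{f}(\check{a}) = \check{b}\}.
\]
Because $\mathbb{P}$, $A$, $B$, $\dot{f}$ and $p_0$ all lie in the ground model, and because the forcing relation $\forces$ is definable there, $F$ is a genuine ground model function. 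This definability is the conceptual heart of the argument and the point that must be invoked most carefully.

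First I would bound $\vert F(a)\vert$ for a fixed $a\in A$. For each $b\in F(a)$ choose a witnessing condition $p_b\le p_0$ with $p_b\forces\dot{f}(\check{a})=\check{b}$. If $b\neq b'$ are distinct elements of $F(a)$, then $p_b$ and $p_{b'}$ can admit no common extension: any $r\le p_b,\,p_{b'}$ would force both $\dot{f}(\check{a})=\check{b}$ and $\dot{f}(\check{a})=\check{b'}$, hence $\check{b}=\check{b'}$ and so $b=b'$, a contradiction. Thus $\{p_b:b\in F(a)\}$ is a set of pairwise incompatible conditions. Since an incompatible pair is in particular incomparable (a condition lying below another would be a common extension of the two), the hypothesis on $\mathbb{P}$ applies directly and yields $\vert F(a)\vert < \theta$, exactly the required ground model estimate.

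Finally I would check in the generic extension that $f(a)\in F(a)$ for every $a\in A$. As $p_0\in G$, the value $f(a)=b^{\ast}$ is some genuine element of $B$, and by the truth lemma some condition in $G$ forces $\dot{f}(\check{a})=\check{b^{\ast}}$; since $G$ is a filter this witness may be taken below $p_0$, placing $b^{\ast}\in F(a)$ by definition. The main obstacle here is conceptual rather than computational: one must be certain that $F(a)$, defined through an existential quantifier over $\mathbb{P}$ and the forcing relation, is computed in the ground model rather than in $V[G]$, for otherwise the cardinality bound—itself a ground model statement about antichains—would not be the object the lemma asserts. Once ground model definability is granted, the antichain estimate and the truth lemma combine to give the conclusion.
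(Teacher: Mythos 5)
Your proof is correct: fixing a name $\dot{f}$ and a condition $p_{0}$, defining $F(a)$ in the ground model via the definability of the forcing relation, bounding $\vert F(a)\vert$ by observing that witnessing conditions for distinct values are pairwise incompatible (hence pairwise incomparable, so the hypothesis on $\mathbb{P}$ applies), and finishing with the truth lemma is exactly the right argument. The paper does not prove this lemma at all — it cites it as Lemma IV.7.8 of Kunen's book — and your argument is precisely the standard proof of that cited result, so your proposal agrees with the intended justification.
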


With $\mathbb{P}$ being the partially ordered set for adding a number of random reals, it follows from Lemma \ref{approximation} that every set of ordinals in $\textsf{L}\lbrack G\rbrack$ of cardinality $\aleph_1$ is contained in a set of ordinals in $\textsf{L}$ of cardinality $\aleph_1$. Thus, in the generic extension $\textsf{L}\lbrack G\rbrack$ the statement $\textsf{KH}(\kappa,\aleph_1)$ is still true for each regular cardinal $\kappa>\aleph_1$. Similarly, $\textsf{KH}(\kappa,\aleph_0)$ is still true for each regular cardinal $\kappa\ge \aleph_1$. It follows from Theorem \ref{implications} that in $\textsf{L}\lbrack G\rbrack$ the instance $\textsf{BC}_{\kappa}$ fails for each infinite regular cardinal $\kappa$.

{\flushleft{\underline{\bf Step 3.2:}}} Starting with the model $\textsf{L}\lbrack G\rbrack$ from Step 3.1, letting $\mu$ be an inaccessible cardinal, force next with the Levy Collapse $\textsf{Lv}(\mu,\aleph_3)$. A good overview of the Levy collapse is provided on pp. 126 - 131 of \cite{Kanamori}. In the resulting model we have:
\begin{enumerate}
\item{$\mu = \aleph_3$}
\item{$\textsf{KH}(\aleph_3,\aleph_0)$ as well as $\textsf{KH}(\aleph_3,\aleph_1)$ fail.}
\item{$2^{\aleph_0} = 2^{\aleph_1} = 2^{\aleph_2} = \aleph_3$}
\item{$\textsf{RB}({\mathbb R},+) =\aleph_1$ and }
\item{For regular uncountable $\mu\neq\aleph_3$, ${\textsf{KH}}(\mu,\aleph_0)$ as well as ${\textsf{KH}}(\mu,\aleph_1)$ hold.}
\end{enumerate}

It follows that in this generic extension we have $\neg\textsf{BC}_{\kappa}$ for each regular cardinal $\kappa\neq \aleph_3$.

{\flushleft{\underline{\bf Step 3.3:}}} Next we show that in this generic extension $\textsf{BC}_{\aleph_3}$ holds:\\

For an infinite cardinal $\kappa$ and for a subset $C$ of $\kappa$, if $S$ is a subset of $\prod_{\alpha<\kappa}X_{\alpha}$, then $S\lceil_C$ denotes the set $\{f\lceil_C:f\in S\}$. 
 \begin{lemma} [\cite{GS}, Lemma 4]\label{GS4} Let $\kappa$ be an infinite cardinal number, and let $(G_{\alpha}:\alpha<\kappa)$ be a family of topological groups. Let $X$ be a subset of the Tychonoff product $\Pi_{\alpha<\kappa}G_{\alpha}$. The following are equivalent:
 \begin{enumerate}
 \item{$X$ is a Rothberger bounded subset of $\Pi_{\alpha<\kappa}G_{\alpha}$.}
 \item{For each countable subset $C$ of $\kappa$, $X\lceil_C$ is a Rothberger bounded subset of $\Pi_{\alpha\in C}X_{\alpha}$.}
 \end{enumerate}
 \end{lemma}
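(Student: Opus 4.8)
The plan is to prove the equivalence by reducing the global Rothberger boundedness of $X$ in the full product to the simultaneous Rothberger boundedness of all its countable projections, exploiting the fact that the canonical neighborhood covers of a Tychonoff product are determined by finitely many coordinates. The forward direction $(1)\Rightarrow(2)$ should be the routine half: the projection map $\pi_C\colon\Pi_{\alpha<\kappa}G_\alpha\to\Pi_{\alpha\in C}G_\alpha$ is a continuous surjective homomorphism, so it carries the canonical neighborhood covers of the target to neighborhood covers of the source, and a selection witnessing Rothberger boundedness of $X$ upstairs projects to a selection witnessing Rothberger boundedness of $X\lceil_C$ downstairs. I would phrase this by taking an arbitrary sequence $(\open(U_n):n\in\naturals)$ of basic neighborhood covers of $\Pi_{\alpha\in C}G_\alpha$, pulling each $U_n$ back to a neighborhood $\pi_C^{-1}(U_n)$ of the identity in the big product, applying (1) to obtain points $x_n$, and then checking that $(\pi_C(x_n):n\in\naturals)$ does the job for $X\lceil_C$.

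The substantive direction is $(2)\Rightarrow(1)$. Here I would start from an arbitrary sequence $(\open(V_n):n\in\naturals)$ of neighborhood covers of the full product $\Pi_{\alpha<\kappa}G_\alpha$ coming from basic neighborhoods $V_n$ of the identity. Since each $V_n$ is a basic open neighborhood in the Tychonoff product, it restricts nontrivially in only finitely many coordinates; let $F_n\subseteq\kappa$ be that finite support set. The crucial observation is that $C\colonequals\bigcup_{n\in\naturals}F_n$ is a \emph{countable} subset of $\kappa$, and that the entire sequence $(V_n:n\in\naturals)$ is, up to the projection $\pi_C$, a sequence of neighborhoods of the identity in the countable subproduct $\Pi_{\alpha\in C}G_\alpha$: each $V_n$ is exactly $\pi_C^{-1}$ of a basic neighborhood $W_n$ of the identity there. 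Now apply hypothesis (2) to this countable set $C$: the projection $X\lceil_C$ is Rothberger bounded in $\Pi_{\alpha\in C}G_\alpha$, so there are elements $y_n\in\Pi_{\alpha\in C}G_\alpha$ with $X\lceil_C\subseteq\bigcup_n y_n\odot W_n$.

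The final step is to lift the witnesses $y_n$ back up. I would choose any $x_n\in\Pi_{\alpha<\kappa}G_\alpha$ with $\pi_C(x_n)=y_n$ (for instance agreeing with $y_n$ on $C$ and with the identity off $C$), and then verify that $X\subseteq\bigcup_n x_n\odot V_n$. This inclusion follows because membership of a point $p\in X$ in $x_n\odot V_n$ depends only on the coordinates in $F_n\subseteq C$: since $\pi_C(p)\in X\lceil_C$ lands in some $y_n\odot W_n$, the point $p$ itself lands in $x_n\odot V_n$, as the two cosets agree on the relevant support. The main obstacle, and the only place requiring genuine care rather than bookkeeping, is ensuring that every neighborhood cover one must handle in (1) really does arise from finitely supported basic neighborhoods so that the countable support $C$ is well defined; this is legitimate because the covers in $\onbd$ are generated by basic neighborhoods of the identity, and it suffices to verify the Rothberger selection property against a neighborhood base. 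Once the reduction to finite supports is justified, the coordinatewise coincidence argument closes the proof cleanly.
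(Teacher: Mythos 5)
Your proposal is correct: both directions are sound, and the one point of genuine care --- that it suffices to verify the Rothberger selection property against basic (finitely supported) neighborhoods of the identity, so that the supports collect into a countable set $C$ --- is exactly the point you flag and justify. Note that the paper itself gives no proof of this lemma (it is imported verbatim as Lemma 4 of \cite{GS}); your finite-support reduction and coordinatewise lifting argument is the standard proof of that cited result, so there is nothing to compare beyond saying your argument fills the role the citation plays.
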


Let $(G,\odot)$ be an $\aleph_0$-bounded topological group of weight $\aleph_3$, and let $X$ be a Rothberger bounded subset of $G$. By Theorem \ref{guranquantify} we find $\aleph_3$ separable metrizable topological groups $G_{\alpha},\; \alpha<\aleph_3$ such that $G$ embeds as subgroup into the product $\Pi_{\alpha<\aleph_3}G_{\alpha}$. Under this image $X$ is a Rothberger bounded subset of this product, and thus for each countable subset $C$ of $\aleph_3$, $X\lceil_C$ is a Rothberger bounded subset of $\Pi_{\alpha\in C}G_{\alpha}$, which as a product of countably many separable metrizable spaces is separable and metrizable, and thus by Corollary \ref{rbnumbertransfer}, $\vert X\lceil_C\vert\leq \aleph_1$. But then $\vert X\vert \le (\aleph_3^{\aleph_0})^{\aleph_1} = \aleph_3$. This completes the proof of Theorem \ref{GSProblem2}.

\section{In the presence of $\textsf{BC}_{\aleph_0}$, $\textsf{BC}_{\kappa}$ may hold for only one uncountable regular cardinal number.}

\begin{theorem}\label{LIntervals}
Assume the consistency of $\textsf{ZFC}+$ there is an inaccessible cardinal. For each $n\in\naturals$ with $n>1$ it is consistent that $\textsf{BC}_{\aleph_0} + \textsf{BC}_{\aleph_n}$ while for any other uncountable regular cardinal $\kappa$, $\neg\textsf{BC}_{\kappa}$. 
\end{theorem}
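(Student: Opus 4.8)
The plan is to generalize the three-step construction used for Theorem~\ref{GSProblem2}, replacing the specific collapse to $\aleph_3$ with a collapse to $\aleph_n$ for the given $n>1$. The target model should satisfy $\textsf{BC}_{\aleph_0}$, satisfy $\textsf{BC}_{\aleph_n}$, and fail $\textsf{BC}_{\kappa}$ for every other uncountable regular $\kappa$. The essential difference from Theorem~\ref{GSProblem2} is that we now \emph{want} $\textsf{BC}_{\aleph_0}$ to hold, whereas there we forced it to fail by adding many random reals. So the first structural change is to arrange the Borel Conjecture itself in the ground configuration, using Laver's model (as in \cite{laver}) rather than a random real extension, and then to perform a collapse that installs $\neg\textsf{KH}(\aleph_n,\aleph_0)$ while preserving $\textsf{KH}(\kappa,\aleph_0)$ at all other uncountable regular $\kappa$.

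Concretely, I would proceed as follows. First, start from $\textsf{V}=\textsf{L}$ so that by Theorem~\ref{JensenTh} we have $\textsf{KH}(\kappa,\lambda)$ for all infinite $\lambda<\kappa$ with $\kappa$ regular. Second, force $\textsf{BC}_{\aleph_0}$ by a Laver-style countable support iteration of length $\omega_2$, checking (via the approximation/fusion machinery available for such proper, $\aleph_2$-cc iterations) that the Kurepa families $\textsf{KH}(\kappa,\aleph_0)$ at regular $\kappa\ge\aleph_2$ survive; the point is that a proper iteration satisfying a suitable chain condition cannot collapse these families because, by an approximation argument in the spirit of Lemma~\ref{approximation}, every new countable set of ordinals is covered by a ground-model countable set, so the trace structure that defines a Kurepa family is unaffected. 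Third, with $\mu$ an inaccessible cardinal in the intermediate model, force with the Levy collapse $\textsf{Lv}(\mu,\aleph_n)$. In the final model $\mu=\aleph_n$, the collapse destroys $\textsf{KH}(\aleph_n,\aleph_0)$ (this is exactly the role of inaccessibility, as in Step~3.2), while for every regular uncountable $\kappa\neq\aleph_n$ the Kurepa family $\textsf{KH}(\kappa,\aleph_0)$ is preserved. By Proposition~\ref{implications}, $\textsf{KH}(\kappa,\aleph_0)$ at each such $\kappa$ yields $\neg\textsf{BC}_{\kappa}$, and $\neg\textsf{KH}(\aleph_n,\aleph_0)$ together with the transfer argument gives $\textsf{BC}_{\aleph_n}$.

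For the positive direction $\textsf{BC}_{\aleph_n}$ I would reuse the argument of Step~3.3 essentially verbatim: given an $\aleph_0$-bounded group $(G,\odot)$ of weight $\aleph_n$, embed it via Theorem~\ref{guranquantify} into a product of $\aleph_n$ separable metrizable groups, apply Lemma~\ref{GS4} to reduce a Rothberger bounded $X$ to its countable-coordinate restrictions $X\lceil_C$, bound each $\vert X\lceil_C\vert$ by $\textsf{RB}(\reals,+)$ using Corollary~\ref{rbnumbertransfer}, and conclude with the cardinal arithmetic $\vert X\vert\le(\aleph_n^{\aleph_0})^{\textsf{RB}(\reals,+)}=\aleph_n$. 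Here one needs $\textsf{RB}(\reals,+)<\aleph_n$ and the continuum placed so that Corollary~\ref{rbnumbertransfer} applies; in the Laver model $\textsf{RB}(\reals,+)=\aleph_0$, so this is comfortable provided the collapse does not inflate $\textsf{RB}(\reals,+)$, which the preservation of $\textsf{BC}_{\aleph_0}$ guarantees.

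The main obstacle I expect is the simultaneous preservation bookkeeping across the two-stage forcing: I must verify that the Laver iteration preserves the ground-model Kurepa families at all regular $\kappa\ge\aleph_2$ (so that they remain available for the final $\neg\textsf{BC}_{\kappa}$), and that the subsequent Levy collapse both preserves $\textsf{BC}_{\aleph_0}$ (equivalently, does not raise $\textsf{RB}(\reals,+)$) and preserves $\textsf{KH}(\kappa,\aleph_0)$ for $\kappa\neq\aleph_n$ while destroying it exactly at $\aleph_n$. The selective destruction at $\aleph_n$ versus preservation elsewhere is the delicate point, and it is precisely where the inaccessibility of $\mu$ and the precise chain-condition/approximation properties of each forcing must be matched; this is the analogue of the calculation underlying items (2) and (5) in Step~3.2, now carried out at the general index $n$ rather than $n=3$.
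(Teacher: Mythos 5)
Your two-stage skeleton does match the paper's proof: stage one forces Borel's Conjecture over $\textsf{L}$ by an $\omega_2$-length countable support iteration (the paper uses Mathias forcing rather than Laver forcing, an immaterial difference), and the survival of the ground-model Kurepa families --- hence $\neg\textsf{BC}_{\kappa}$ at every regular uncountable $\kappa$ --- is argued exactly as you do, via properness (Lemma \ref{lm:pfa}). The gap is stage two. The paper does not use a bare Levy collapse there: it forces with the partial order of Case 2 of Section 2 of Friedman--Golshani \cite{F-G}, citing their Lemmas 2.1, 2.2, 2.3 and 2.7 for precisely the facts you assert without proof, namely that $\textsf{KH}(\aleph_n,\aleph_0)$ is destroyed, that $\textsf{KH}(\kappa,\aleph_0)$ survives at every other regular uncountable $\kappa$, and that no new reals are added and $2^{\aleph_0}=2^{\aleph_1}=\aleph_2$ is preserved (so $\textsf{BC}_{\aleph_0}$ survives); the positive instance $\textsf{BC}_{\aleph_n}$ is then read off from the equivalence, in the presence of $\textsf{BC}_{\aleph_0}$, of $\textsf{BC}_{\aleph_n}$ with $\neg\textsf{KH}(\aleph_n,\aleph_0)$ (the remark following Proposition \ref{implications}). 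A bare collapse demonstrably cannot replace this. Take $n=2$: $\textsf{Lv}(\mu,\aleph_2)$ has countable conditions and collapses every cardinal in $(\aleph_1,\mu)$ to $\aleph_1$, in particular the continuum of the Laver model, which is $\aleph_2$; since no reals are added, \textsf{CH} holds in the final model, so by Sierpi\'nski's theorem $\textsf{BC}_{\aleph_0}$ fails --- the opposite of what you need. Your remark that ``the preservation of $\textsf{BC}_{\aleph_0}$ guarantees'' the collapse does not inflate $\textsf{RB}(\reals,+)$ is circular at exactly this point.

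For $n\ge3$ the proposal is not salvaged either, because the claim that the collapse destroys $\textsf{KH}(\aleph_n,\aleph_0)$ is the entire technical content and cannot be carried by analogy with Step 3.2. The Kurepa families that matter here --- those Jensen's theorem provides in $\textsf{L}$, and those tied to Rothberger bounded sets in Theorem \ref{KHequivalence} --- consist of \emph{arbitrary} subsets of $\kappa$: a family of countable subsets can never have size $>\kappa$ in $\textsf{L}$, where $\kappa^{\aleph_0}=\kappa$ for regular $\kappa$, so the cheap counting argument (``after collapsing an inaccessible there are only $\aleph_n$ countable subsets of $\aleph_n$'') kills nothing relevant. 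Silver's branch argument also does not lift from $\omega_1$ to $\aleph_n$: at $\omega_1$ the tree of traces has size $\aleph_1<\mu$ and is therefore captured in an intermediate extension, whereas at $\aleph_n=\mu$ the trace system has size $\mu$ and no such capturing is available --- this is exactly the difficulty the Friedman--Golshani forcing is engineered to overcome. Finally, your fallback argument for the positive direction is itself circular: the inequality $\vert X\vert\le(\aleph_n^{\aleph_0})^{\textsf{RB}(\reals,+)}$ is not cardinal arithmetic, since a Kurepa family is precisely a counterexample (in $\textsf{L}$, a Kurepa family on $\omega_1$ has all countable restrictions countable yet has size $\aleph_2>(\aleph_1^{\aleph_0})^{\aleph_0}=\aleph_1$); that step presupposes $\neg\textsf{KH}(\aleph_n,\aleph_0)$, the very statement left unestablished. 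Indeed, since under $\textsf{BC}_{\aleph_0}$ the statements $\textsf{BC}_{\aleph_n}$ and $\neg\textsf{KH}(\aleph_n,\aleph_0)$ are equivalent, no proof of $\textsf{BC}_{\aleph_n}$ in this setting can avoid establishing $\neg\textsf{KH}(\aleph_n,\aleph_0)$, and that is the step your proposal leaves blank.
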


\begin{proof}
We organize the proof in two steps.
{\flushleft{\bf Step 1: }} The generic extension $\textsf{L}\lbrack G\rbrack$ in which the only regular cardinal $\kappa$ for which the instance $\textsf{BC}_{\kappa}$ holds, is $\kappa=\aleph_0$.

Let ${\mathbb M}$ be the $\aleph_2$-step countable support iteration of the Mathias reals partially ordered set. By the results in Section 9 of \cite{baumgartner}, ${\mathbb M}$ is a proper partially ordered set. Moreover, if CH holds, then by Theorem 7.2 of \cite{baumgartner} ${\mathbb M}$ preserves $\aleph_1$ and has the $\aleph_2$-chain condition, and thus by Theorem IV.7.9 of \cite{Kunen}, ${\mathbb M}$ preserves cardinals.

Start with ground model $\textsf{V} = \textsf{L}$. The $\aleph_2$-step countable support iteration $\mathbb{M}$ of the Mathias reals poset results in the generic extension $\textsf{L}\lbrack G\rbrack$ in which Borel's Conjecture holds.  Lemma \ref{lm:pfa} is given as Lemma 31.4 in \cite{Jech}.
\begin{lemma}\label{lm:pfa}
If ${\mathbb P}$ is a proper partially ordered set and $G$ is ${\mathbb P}$ generic, then each countable set in $\textsf{V}\lbrack G\rbrack$ is a subset of a set in $\textsf{V}$ that is countable in $\textsf{V}$. 
\end{lemma}
Since ${\mathbb M}$ is a cardinal preserving proper partially ordered set, for all cardinal numbers $\kappa$ for which $\textsf{KH}(\kappa,\aleph_0)$ was true in $\textsf{L}$, we still have $\textsf{KH}(\kappa,\aleph_0)$ true in $\textsf{L}\lbrack G\rbrack$. By Proposition \ref{implications} and Theorem \ref{JensenTh}, for each regular uncountable cardinal $\kappa$ the instance $\textsf{BC}_{\kappa}$ is false in $\textsf{L}\lbrack G\rbrack$.

At this stage we observe that the cardinal arithmetic in $\textsf{L}\lbrack G\rbrack$ deviates from that in $\textsf{L}$ only in that $2^{\aleph_0} = 2^{\aleph_1} = \aleph_2$: For all $\kappa>\aleph_0$ we have $2^{\kappa}=\kappa^+$ in $\textsf{L}\lbrack G\rbrack$.

{\flushleft{\bf Step 2: }} For each integer $n>1$ there is a generic extension $\textsf{L}\lbrack G\rbrack\lbrack K\rbrack$ of $\textsf{L}\lbrack G\rbrack$  in which the only regular cardinals $\kappa$ for which an instance $\textsf{BC}_{\kappa}$ holds, are $\kappa=\aleph_0$ and $\kappa=\aleph_n$.

Fix an integer $n>1$. We now proceed as in Case 2 of Section 2 of \cite{F-G}, the only difference being that in \cite{F-G} the ground model is $\textsf{L}$ while in our case the ground model is $\textsf{L}\lbrack G\rbrack$. We leave it to the reader to check that Lemmas 2.1, 2.2, 2.3 and 2.7 of \cite{F-G} also hold over the ground model $\textsf{L}\lbrack G\rbrack$. 
It follows that in $\textsf{L}\lbrack G\rbrack\lbrack K\rbrack$ for regular uncountable cardinals $\mu\neq \aleph_n$ the statement $\textsf{KH}(\mu,\aleph_0)$ holds and $\textsf{KH}(\aleph_n,\aleph_0)$ fails.

Moreover by Lemma 2.2 of \cite{F-G} the partially ordered set used in the forcing extension over $\textsf{L}\lbrack G\rbrack$ preserves the cardinal equalities $2^{\aleph_0} = 2^{\aleph_1} = \aleph_2$ and does not add any new subsets of the real line, ${\mathbb R}$. In the generic extension $\textsf{L}\lbrack G\rbrack\lbrack K\rbrack$ the instance $\textsf{BC}_{\aleph_0}$ of Conjecture \ref{GBC} still holds.

Finally, applying Proposition \ref{implications} we find that in the generic extension $\textsf{L}\lbrack G\rbrack\lbrack K\rbrack$ the instances $\textsf{BC}_{\aleph_0}$ and $\textsf{BC}_{\aleph_n}$ of Conjecture \ref{GBC} hold, while for all other regular cardinals $\kappa$ the instances $\textsf{BC}_{\kappa}$ fail.  
\end{proof}
Note, incidentally, that in the generic extension in the proof of Theorem \ref{LIntervals}, we have for all cardinals $\kappa>\aleph_0$ that $2^{\kappa} = \kappa^+$.

\section{Conclusion}

There are numerous questions about instances of Conjecture 1 to which we do not know, at this time, answers. We mention only a few.

In \cite{GS}  Theorem \ref{failurecon} was proven by showing that $(\forall \kappa)(\neg {\sf  BC}(^{\kappa}2,\oplus))$ holds in generic extensions by $\aleph_1$ Cohen reals. Might any instances of Conjecture \ref{GBC} hold in the constructible universe? 
\begin{GSproblem}{\cite{GS}} Does ${\mathbf V}={\mathbf L}$ imply $(\forall \kappa)(\neg {\sf BC}(^{\kappa}2,\oplus))$?
\end{GSproblem}

It is expected that the answer to this problem is ``yes", but only fragments of this suspicion have been confirmed:
\begin{theorem}\label{LBC}
Assume that $\mathbf{ V} = \mathbf{L}$. For each cardinal $\kappa$ that is either regular, or singular of countable cofinality, $\neg\textsf{BC}_{\kappa}$.
\end{theorem}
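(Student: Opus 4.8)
The plan is to split the statement into two regimes according to whether $\kappa$ is countable or uncountable, and in each regime to reduce $\neg\textsf{BC}_{\kappa}$ to a combinatorial statement that $\textsf{V}=\textsf{L}$ is already known to decide. The whole argument is an assembly of results collected earlier in the paper, so I would organize it as a short case analysis rather than a fresh construction.

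For the uncountable cardinals---whether regular or singular of countable cofinality---I would argue as follows. Proposition \ref{implications} tells us that for every uncountable $\kappa$ the chain $\textsf{BC}_{\kappa}\Rightarrow\textsf{BC}(^{\kappa}2,\oplus)\Rightarrow\neg\textsf{KH}(\kappa,\aleph_0)$ holds; in particular, taking the contrapositive of the composite implication, it suffices to verify $\textsf{KH}(\kappa,\aleph_0)$ in order to conclude $\neg\textsf{BC}_{\kappa}$. But this is exactly what Jensen's Theorem \ref{JensenTh} supplies: under $\textsf{V}=\textsf{L}$, $\textsf{KH}(\kappa,\lambda)$ holds for all infinite $\lambda<\kappa$ whenever $\kappa$ is regular or of countable cofinality, and specializing to $\lambda=\aleph_0$ yields a $(\kappa,\aleph_0)$ Kurepa family for every such uncountable $\kappa$. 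Hence $\neg\textsf{BC}_{\kappa}$ for all uncountable $\kappa$ named in the statement.

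For $\kappa=\aleph_0$, which is the remaining (regular) case, Proposition \ref{implications} does not apply, since it is stated only for uncountable cardinals. Here I would instead invoke the classical route: $\textsf{V}=\textsf{L}$ implies \textsf{CH}, and by Sierpi\'nski's theorem \textsf{CH} implies the failure of $\textsf{BC}_{\aleph_0}$, which is Borel's Conjecture. This gives $\neg\textsf{BC}_{\aleph_0}$ and closes the case analysis.

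There is no serious technical obstacle; the proof is a bookkeeping combination of facts already in hand. The only point that genuinely requires care is matching the scope of Jensen's theorem to the cardinals listed in the statement: one should observe that an uncountable cardinal of countable cofinality is automatically singular, so that ``regular, or singular of countable cofinality'' is precisely the class for which Theorem \ref{JensenTh} delivers $\textsf{KH}(\kappa,\aleph_0)$, while $\kappa=\aleph_0$---excluded both from Proposition \ref{implications} and from Jensen's uncountable-$\lambda$ framework---is the one value that must be dispatched separately through \textsf{CH}.
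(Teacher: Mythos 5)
Your proposal is correct and follows essentially the same route as the paper's own proof: for uncountable $\kappa$ that are regular or of countable cofinality, Jensen's Theorem \ref{JensenTh} yields $\textsf{KH}(\kappa,\aleph_0)$ under $\mathbf{V}=\mathbf{L}$, and Proposition \ref{implications} (Theorem 11 of \cite{GS}) then gives $\neg\textsf{BC}_{\kappa}$. The only difference is that you explicitly handle the case $\kappa=\aleph_0$ via Sierpi\'nski's result that \textsf{CH} implies $\neg\textsf{BC}_{\aleph_0}$, a case the paper's proof leaves implicit; this is a minor gain in completeness rather than a different argument.
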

\begin{proof}
 In Chapter VII.3 of \cite{Devlin} it is proven that for regular uncountable $\kappa$, $\textsf{KH}(\kappa,\aleph_0)$ holds in $\textsf{L}$. In Exercise VII.3 of \cite{Devlin}, it is also outlined how to prove that for an uncountable singular cardinal $\kappa$ of countable cofinality, the statement $\textsf{KH}(\kappa,\aleph_0)$ holds in $\textsf{L}$.
 By Theorem 11 of \cite{GS}, for an uncountable cardinal $\kappa$, $\textsf{KH}(\kappa,\aleph_0)$ implies the failure of $\textsf{BC}_{\kappa}$.
\end{proof}

Thus, to fully answer Problem 1, the following needs to be settled. \emph{ 
Assume $\mathbf{V}=\mathbf{L}$. If $\kappa$ is a singular cardinal of uncountable cofinality, does $\neg\textsf{BC}_{\kappa}$ hold?}

Theorem \ref{GSProblem2} partially answers Problem 2 of \cite{GS}. The techniques used in the proof of Theorem \ref{GSProblem2} would probably not adapt to for example deter- mine whether it is consistent (relative to the consistency of an appropriate cardinal hypothesis) that $\aleph_1$ is the least element of the set $\textsf{B}$ (i.e., $\textsf{BC}_{\aleph_0}$ fails while $\textsf{BC}_{\aleph_1}$ holds), or to determine if it is consistent (relative to the consistency of an appropriate cardinal hypothesis) that $\aleph_{\omega}$ is the least element of the set $\textsf{B}$ (i.e., $\textsf{BC}_{\aleph_n}$ fails for all $n<\omega$, while $\textsf{BC}_{\aleph_{\omega}}$ holds).

It was also pointed out that the consistency of the instance $\textsf{BC}(^{\aleph_{\omega}}2,\aleph_{\omega})$ was obtained from the consistency of the existence of a 2-huge cardinal. It would be of interest to know the exact consistency strength of $\textsf{BC}(^{\aleph_{\omega}}2,\aleph_{\omega})$.

We expect that Conjecture \ref{GBC} is consistent relative to the consistency of some large cardinal axioms. At present there is no indication of what large cardinal axiom might suffice. Perhaps an even more ambitious goal is:
\begin{problem} Determine the consistency strength of Conjecture \ref{GBC}.
\end{problem}

\section{Acknowledgments}

I would like to thank the organizers of the conference \emph{Frontiers in Se-lection Principles} that took place in Warsaw in late August 2017 for the opportunity to speak on generalized versions of Borel?s Conjecture. Scientific exchange at this conference provided impetus for writing this paper.

\end{document}